\documentclass[10pt]{article}
\linespread{1.3} 
\usepackage{graphicx}
\usepackage{amsmath}
\usepackage{amsfonts}
\usepackage{amssymb}
\usepackage{amsthm} 
\usepackage[english]{babel}
\usepackage{enumerate}
\usepackage[cp1250]{inputenc}
\usepackage[a4paper, left=2.5cm, right=2.5cm, top=3.5cm, bottom=3.5cm, headsep=1.2cm]{geometry}
\usepackage{authblk}
\usepackage[para]{footmisc}

\usepackage{caption}
\captionsetup[figure]{margin=10pt,font=small,labelfont=bf,labelsep=space,
justification=justified, name=Figure}

\usepackage{color}
\definecolor{sepia}{cmyk}{0, 0.83, 1, 0.70}
\usepackage[svgnames]{xcolor}

\usepackage[plainpages=false,pdfpagelabels,bookmarksnumbered,%
 colorlinks=true,%
 linkcolor=sepia,%
 citecolor=sepia,%
 unicode]{hyperref}


\usepackage{makecell}
\usepackage{rotating}

\renewcommand\theadfont{\bfseries}

\newtheorem{theorem}{Theorem}

\newtheorem{corollary}[theorem]{Corollary}

\newtheorem{lemma}[theorem]{Lemma}

\newtheorem{proposition}[theorem]{Proposition}

\theoremstyle{definition}

\newtheorem{definition}[theorem]{Definition}
\newtheorem{example}[theorem]{Example}

\newtheorem{remark}[theorem]{Remark}

\numberwithin{equation}{section} 


\makeatletter
\renewenvironment{proof}[1][\proofname]
{\par
	\pushQED{$\blacksquare$} 
	\normalfont\topsep6\p@\@plus6\p@\relax
	\trivlist
	\item[\hskip\labelsep\bfseries#1\@addpunct{.}]
	\ignorespaces}
{\popQED \endtrivlist\@endpefalse}
\makeatother

\DeclareMathOperator*{\interior}{int}
\DeclareMathOperator*{\ri}{ri}

\DeclareMathOperator*{\fix}{Fix}
\DeclareMathOperator*{\id}{Id}

\DeclareMathOperator*{\argmax}{argmax}

\DeclareMathOperator*{\supremum}{sup}

\begin{document}

\title{\textbf{Convergence Properties of Dynamic String Averaging Projection Methods in the Presence of Perturbations}}
\author[1,2]{Christian Bargetz\thanks{christian.bargetz@uibk.ac.at}}
\author[1]{Simeon Reich\thanks{sreich@tx.technion.ac.il}}
\author[1]{Rafa\l\ Zalas\thanks{rzalas@tx.technion.ac.il}}
\affil[1]{Department of Mathematics,\protect\\The Technion---Israel Institute of Technology,\protect\\32000 Haifa, Israel}
\affil[2]{Department of Mathematics,\protect\\University of Innsbruck,\protect\\Technikerstraße~13, 6020 Innsbruck, Austria}
\maketitle

\begin{abstract}
  Assuming that the absence of perturbations guarantees weak or strong convergence to a common fixed point, we study the behavior of perturbed products of an infinite family of nonexpansive operators. Our main result indicates that the convergence rate of unperturbed products is essentially preserved in the presence of perturbations. This, in particular, applies to the linear convergence rate of dynamic string averaging projection methods, which we establish here as well. Moreover, we show how this result can be applied to the superiorization methodology.
  \vskip2mm\noindent
  \textbf{Keywords:} Linear rate, perturbation resilience, string averaging, superiorization.
  \vskip1mm\noindent
  \textbf{Mathematics Subject Classification (2010):} 46N10, 46N40, 47H09, 47H10, 47J25, 47N10, 65F10, 65J99.
\end{abstract}

\section{Introduction}
For a given \textit{common fixed point problem}, that is, find  $x\in \bigcap_{k=0}^\infty \fix T_k$,
we consider an iterative scheme of the following form:
\begin{equation}\label{eq:ItScheme}
  x^{0}\in\mathcal{H}\qquad\text{and}\qquad x^{k+1} := T_kx^k,
\end{equation}
where each $T_k\colon \mathcal{H}\to\mathcal{H}$, $k=0,1,2,\ldots$, is a nonexpansive operator  and $\mathcal H$ is a Hilbert space. We recall that $T\colon\mathcal H\rightarrow \mathcal H$ is \textit{nonexpansive} if $\|Tx-Ty\| \leq \|x-y\|$
for all $x,y\in\mathcal{H}$. 
Scheme~\eqref{eq:ItScheme} is called \emph{perturbation resilient} if under the assumption that it generates a sequence converging to a solution of the problem, it follows that 
any sequence $\{y^k\}_{k=0}^{\infty}$ satisfying
\begin{equation}\label{eq:PertSeq}
  y^0\in\mathcal{H} \qquad\text{and}\qquad \|y^{k+1}-T_ky^k\| \leq e_k,
\end{equation}
where $\{e_k\}_{k=0}^{\infty}$ is a suitable sequence of errors, also converges to a (possibly different) solution of the problem.

Perturbation resilience of iterative projection methods was first studied in~\cite{Com2001NumericalRobustness}, where the sequence of errors is assumed to be summable. In~\cite{ButnariuReichZaslavski2008} both weak and strong convergence properties of infinite products of nonexpansive operators in the presence of summable perturbations are considered. As an example of a result on perturbation resilience with respect to weak and strong convergence, we recall a variant of \cite[Theorems~3.2 and 5.2]{ButnariuReichZaslavski2008} in the Hilbert space setting.

\begin{theorem}\label{thm:WeakPertRes}
  For every $k=0,1,2,\ldots$, let $T_k\colon\mathcal H\rightarrow\mathcal H$ be nonexpansive and assume that $C\subseteq \bigcap_k \fix T_k$ is nonempty, closed and convex. Let $\{e_k\}_{k=0}^\infty\subseteq[0,\infty)$ be a given summable sequence of errors and let $\{x^k\}_{k=0}^\infty\subseteq\mathcal H$ be an inexact trajectory such that
  \begin{equation}\label{eq:th:GSC:ek}
    \|x^{k+1}-T_kx^k\|\leq e_k
  \end{equation}
  holds for each $k=0,1,2,\ldots$. Assume that for all $i=0,1,2,\ldots$, there is an $x_i^\infty\in C$ such that
  \begin{equation}\label{eq:WeakConvAssBRZ}
    T_k\cdots T_i x^i \rightharpoonup x_i^\infty.
  \end{equation}
  Then there is a point $x^\infty\in C$ such that $x^k\rightharpoonup x^\infty$. If, in addition, for all $i=0,1,2,\ldots$,
  \begin{equation}\label{eq:StrongConvAssBRZ}
    T_k\cdots T_ix^i \to x_i^\infty,
  \end{equation}
  then $x^k\to x^\infty$.
\end{theorem}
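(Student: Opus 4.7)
The plan is to control the inexact trajectory by comparing it to exact sub-trajectories emanating from each $x^i$. Set $S_k^i := T_k\cdots T_i$ (with $S_{i-1}^i := \id$). By the nonexpansivity of $T_k$ and a one-step induction on $k\geq i$, I would first establish
\[
  \|x^{k+1}-S_k^i x^i\| \;\leq\; \sum_{j=i}^{k} e_j \;\leq\; \varepsilon_i,\qquad \varepsilon_i := \sum_{j=i}^{\infty} e_j,
\]
using only $\|x^{j+1}-T_j x^j\|\leq e_j$ together with $\|T_j a-T_j b\|\leq \|a-b\|$. Since $\sum e_j<\infty$, we have $\varepsilon_i\to 0$.

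Next I would show that the family of limits $\{x_i^\infty\}_{i\geq0}$ is a Cauchy sequence in norm. For $i<j$ and $k\geq j$, the identity $S_k^i x^i = S_k^j(S_{j-1}^i x^i)$ and nonexpansivity of $S_k^j$ give
\[
  \|S_k^i x^i - S_k^j x^j\| \;\leq\; \|S_{j-1}^i x^i - x^j\| \;\leq\; \sum_{l=i}^{j-1} e_l.
\]
Letting $k\to\infty$ and using the lower semicontinuity of the norm with respect to weak convergence (which handles both the weak and strong hypotheses), I get $\|x_i^\infty-x_j^\infty\|\leq\sum_{l=i}^{j-1}e_l$. Hence $\{x_i^\infty\}$ is norm-Cauchy; since $C$ is closed and convex, its strong limit $x^\infty$ belongs to $C$, and $\|x_i^\infty - x^\infty\|\leq\varepsilon_i$.

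For the strong-convergence conclusion under \eqref{eq:StrongConvAssBRZ}, the triangle inequality yields
\[
  \|x^{k+1}-x^\infty\| \;\leq\; \|x^{k+1}-S_k^i x^i\| + \|S_k^i x^i-x_i^\infty\| + \|x_i^\infty-x^\infty\| \;\leq\; 2\varepsilon_i + \|S_k^i x^i - x_i^\infty\|.
\]
Given $\eta>0$, I would first pick $i$ with $2\varepsilon_i<\eta/2$, then invoke \eqref{eq:StrongConvAssBRZ} to find $K$ so that the last term is $<\eta/2$ for $k\geq K$. For the weak-convergence conclusion under \eqref{eq:WeakConvAssBRZ}, I replace norms by pairings against an arbitrary $v\in\mathcal H$: the same triangle decomposition gives
\[
  |\langle x^{k+1}-x^\infty,v\rangle| \;\leq\; 2\varepsilon_i\|v\| + |\langle S_k^i x^i - x_i^\infty,v\rangle|,
\]
so $\limsup_{k\to\infty}|\langle x^{k+1}-x^\infty,v\rangle|\leq 2\varepsilon_i\|v\|$ for every $i$, and letting $i\to\infty$ gives $x^k\rightharpoonup x^\infty$.

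The only real obstacle is the passage $\|x_i^\infty-x_j^\infty\|\leq\sum_{l=i}^{j-1}e_l$ in the purely weak setting: there one must remember that the norm is weakly lower semicontinuous, so the inequality survives taking $k\to\infty$ weakly even though $S_k^i x^i$ and $S_k^j x^j$ need not converge in norm. Everything else reduces to the uniform error bound from step~1 and two triangle inequalities.
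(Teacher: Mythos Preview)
Your proof is correct. The paper does not give its own proof of this statement; it cites \cite{ButnariuReichZaslavski2008} and only re-uses the key error estimate later, inside the proof of Theorem~\ref{th:GSC}. Your Step~1 (the inductive bound $\|x^{k+1}-S_k^i x^i\|\leq\sum_{j=i}^{k}e_j$) is exactly the estimate~\eqref{eq:th:GSC:proof:step1} established there, and your final triangle decomposition in the pairing form is precisely inequality~\eqref{eq:FinalWeak}.

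The one genuine difference in organization is how the limit $x^\infty$ is produced. You first prove that $\{x_i^\infty\}_i$ is norm-Cauchy by bounding $\|S_k^i x^i-S_k^j x^j\|$ uniformly in $k$ and invoking weak lower semicontinuity of the norm, then \emph{define} $x^\infty:=\lim_i x_i^\infty$ and verify $x^k\rightharpoonup x^\infty$ afterwards. The paper's argument (in the proof of Theorem~\ref{th:GSC}) runs in the opposite order: it takes the existence of the weak limit $x^\infty$ of $\{x^k\}$ as already granted by Theorem~\ref{thm:WeakPertRes}, and only then derives $\|x_i^\infty-x^\infty\|\leq\sum_{j\geq i}e_j$ from the bound $|\langle y,x^\infty-x_i^\infty\rangle|\leq\|y\|\sum_{j\geq i}e_j$. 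Your route is slightly more self-contained (it does not need to appeal to the cited reference), while the paper's ordering is tailored to deriving the quantitative rates~\eqref{eq:WeakRate} and~\eqref{eq:StrongRate}. Both routes rest on the same two ingredients: the uniform comparison with exact orbits and a single triangle/pairing inequality.
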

Clearly assumptions \eqref{eq:WeakConvAssBRZ} and \eqref{eq:StrongConvAssBRZ} fit into the perturbation resilience paradigm. The interpretation of these conditions is that if at some point, say $i$, we interrupt the perturbed process $\{x^k\}_{k=0}^\infty$ and begin exact computations starting from $x^i$, then our method will converge. We note here that originally Theorems 3.2 and 5.2 of \cite{ButnariuReichZaslavski2006} were formulated with stronger conditions in complete metric and Banach spaces, respectively. Nevertheless, the proofs remain to be true if one assumes only \eqref{eq:WeakConvAssBRZ} and \eqref{eq:StrongConvAssBRZ} (actually one should see the proofs of \cite[Theorems~3.1 and 5.1]{ButnariuReichZaslavski2008}). A simpler variant of Theorem \ref{thm:WeakPertRes} appeared in \cite[Theorems~4.1 and 4.2]{ButnariuReichZaslavski2006} with a constant sequence of operators ($T_k=T$, for some $T$).

The assumption of summable errors seems to be common for a variety of iterative schemes. For example, in~\cite{NR1997AccretiveOperators} perturbation resilience with respect to summable errors of an iterative scheme for finding zeros of an accretive operator is considered. On the other hand, summability of errors is a basic assumption for the quasi-Fej\'{e}r monotone sequences which constitute another important tool in the study of numerical robustness; see, for example, \cite{Combettes2001,Combettes2015}.

Besides taking into account computational errors, the main reason for the interest in perturbation resilience is the \emph{superiorization methodology}. Roughly speaking, the idea behind this methodology is to use perturbation resilience of an iterative method to introduce perturbations which steer the sequence towards a limit which not only solves the original problem, but should also be superior with respect to the solution obtained without perturbations.
 In this context, iterative schemes of the form
\begin{equation}
  x^0\in\mathcal{H}\qquad\text{and}\qquad x^{k+1} := T_k(x^k-\beta_kv^k),
\end{equation}
are considered where the sequence $\{\beta_k\}_{k=0}^{\infty}\subset(0,\infty)$ is summable and the steering sequence $\{v^k\}_{k=0}^{\infty}$ is bounded. In practice each $v^k$ is somehow related to the subgradient of a certain convex, continuous function $\phi\colon\mathcal H\rightarrow \mathbb R$, for example, $v^k\in\partial \phi(x^k)$, whereas ``superior'' is interpreted as a smaller value of $\phi$. 
For an introduction to superiorization, we refer the interested reader to~\cite{Cen2014Superiorization}. Applications of the superiorization methodology include optimization, see, for example, \cite{CDHST2013ProjectedSubgradient}, and image reconstruction, see, for example, \cite{CensorDavidiHerman2010, GHD2011Reconstruction, Her2009ImageReconstruction, HD2009ImageReconstruction, SP2008ImageReconstruction}.

String averaging projection methods have been introduced in~\cite{CEH2001StringAveraging} for solving the convex feasability problem, that is, given closed and convex sets $C_i$, $i=1,\ldots,M$, such that $C:=\bigcap_{i=1}^{M} C_i$ is nonempty, find a point $x^\infty\in C$. We denote by $P_{C_i}$ the metric projection onto $C_i$, that is, the mapping which maps a point $x\in\mathcal{H}$ to the unique point in $C_i$ with minimal distance to $x$. For $n=1,\ldots,N$, let $J_n=(j_1^n,\ldots,j_{|J_n|}^n)$ be a finite ordered subset of $\{1,\ldots, M\}$ called a \emph{string}. In addition, let $\omega_n\in(0,1)$, $n=1,\ldots,N$, satisfy $\sum_{n=1}^{N} \omega_n = 1$.
The \emph{string averaging projection method} for these data is the iterative method defined by
\begin{equation}\label{eq:DStringAv}
  x^0 \in \mathcal{H}\qquad\text{and}\qquad x^{k+1} := \sum_{n=1}^{N} \omega_n \prod_{j\in J_n} P_{C_j}x^k,
\end{equation}
where $x^0$ is an arbitrary initial point and $\prod_{j\in J} P_{C_j} := P_{C_{j_l}}\ldots P_{C_{j_1}}$ for a string $J=(j_1,\ldots,j_l)$. 
In~\cite{CEH2001StringAveraging} it is shown that any sequence $\{x^k\}_{k=0}^{\infty}$ generated by the above method converges to a point $x^\infty\in C\subseteq\mathbb{R}^{n}$. In addition to metric projections, in~\cite{CEH2001StringAveraging} also relaxed metric projections and Bregman projections are considered.

In~\cite{CZ2013PerturbationStringAveraging} a modification of method~\eqref{eq:DStringAv} is introduced. Instead of applying the same operator at each iterative step, both the strings and the weights may be different at each step. In more detail, the \emph{dynamic string averaging (SA) projection method} is defined by
\begin{equation}
  x^0 \in \mathcal{H}\qquad\text{and}\qquad x^{k+1} := T_k x^{k},
\end{equation}
where
\begin{equation}
  T_k:=\sum_{n=1}^{N_k} \omega^k_n \prod_{j\in J^k_n} P_{C_j}x^k,
\end{equation}
and for all $k=0,1,2,\ldots$, $J_n^k\subseteq I$ is a string and $\omega^k_n\in(0,1)$ with $\sum_{n=1}^{N_k}\omega^k_n=1$. Under the assumption that the family $ \{C_1,\ldots, C_M\}$ is boundedly regular and that the control is 1-intermittent, that is, at all steps $k$ each index $i\in\{1,\ldots,M\}$ appears in at least one of the strings $J_n^k$, the authors of~\cite{CZ2013PerturbationStringAveraging} show that any sequence generated by the dynamic string averaging projection method converges in norm to an element $x^\infty\in C\subseteq\mathcal{H}$. In addition, they prove that the superiorized version has the same convergence properties.

A static version of the superiorized SA projection method appeared for the first time in \cite{ButnariuDavidiHermanKazantsev2007} although it relies heavily on \cite{ButnariuReichZaslavski2006}. Many variants of the SA methods with operators more general than the metric projection can be found in the literature. For example, static SA methods based on the averaged operators in Hilbert space can be found in  \cite[Section 6]{ButnariuReichZaslavski2008}, whereas their dynamic variants (with $s$-intermittent control, $s\geq 1$) appeared in \cite{AleynerReich2008} and \cite[Corollary 5.18]{BauschkeCombettes2011}. 
In~\cite{ReichZalas2016} a general framework for the study of a modular SA methods based on cutters and firmly nonexpansive operators in Hilbert spaces is introduced and convergence properties, including perturbation resilience and superiorization, are studied. In particular, if the family of the fixed point sets is boundedly regular and the operators satisfy certain regularity assumptions, \cite[Theorem~4.5]{ReichZalas2016} shows perturbation resilience for the weak and strong convergence of these methods under summable perturbations.

In all of the above-mentioned results it is shown that weak or strong convergence holds true and that in some cases, the type of convergence can be preserved under summable perturbations. The focus of the present article is on \emph{preservation of the convergence rate}. We are interested, in particular, in the case of linear convergence, which is known to be the case for some variants of the SA projection methods such as cyclic and simultaneous projetion methods; see, for example, \cite{Aronszajn1950, BauschkeBorwein1996, BauschkeNollPhan2015, BeckTeboulle2003, DeutschHundal2006a, DeutschHundal2006b, DeutschHundal2008, GurinPoljakRaik1967, KolobovReichZalas2016, KayalarWeinert1988, Pierra1984}. We emphasize that to the best of our knowledge, linear convergence rates for static/dynamic SA projection methods were unknown till now.
We recall that a sequence $\{x^k\}_{k=0}^{\infty}$ in $\mathcal{H}$ \emph{converges linearly} to a point $x^{\infty}\in\mathcal{H}$ if there are constants $c>0$ and $0<q<1$ such that
\begin{equation}
\|x^k-x^{\infty}\| \leq c q^k
\end{equation}
for all $k=0,1,2,\ldots$.

Our paper is structured as follows.
In Section~\ref{sec:Preliminaries} we introduce some notation and, for the convenience of the reader, we present the concepts and properties we use in the other sections of this article.
In Section~\ref{sec:StringAveraging} we consider the convergence properties of the dynamic string averaging projection method and investigate their relation with the regularity of the family $\{C_1,\ldots,C_M\}$. In particular, we prove linear convergence provided the family
$\{C_1,\ldots,C_M\}$ is boundedly linearly regular. In Section~\ref{sec:InexactOrbits} we provide general results regarding the preservation of the convergence rate for infinite products of nonexpansive operators in the presence of summable perturbations. We specialize our main result to the preservation of linear convergence rates and to the case of superiorization.
In Section~\ref{sec:StringAveraging2} we combine the results of Sections~\ref{sec:StringAveraging} and~\ref{sec:InexactOrbits} to discuss the behavior of dynamic string averaging methods in the presence of summable perturbations. As a particular case, we consider the superiorized dynamic string averaging projection method.

\section{Preliminaries}\label{sec:Preliminaries}
In this paper $\mathcal{H}$ always denotes a real Hilbert space. For a sequence $\{x^k\}_{k=0}^\infty$ in $\mathcal{H}$ and a point $x^\infty\in\mathcal{H}$, we use the notation
\begin{equation}
  x^k\rightharpoonup x^\infty \qquad\text{and}\qquad x^k\to x^\infty
\end{equation}
to indicate that $\{x^k\}_{k=0}^{\infty}$ converges to $x^\infty$ weakly and in norm, respectively. 

Given a nonempty, closed and convex set $C\subseteq\mathcal{H}$, we denote by $P_C\colon\mathcal{H}\to\mathcal{H}$ the \emph{metric projection onto $C$}, that is, the operator which maps $x\in\mathcal{H}$ to the unique point in $C$ closest to $x$. The operator $P_C$ is well defined for such sets $C$ and it is not difficult to see that it is nonexpansive; see, for example, \cite[Proposition 4.8]{BauschkeCombettes2011}, \cite[Theorem 2.2.21]{Ceg2012IterativeMethods} or \cite[Theorem 3.6]{GoebelReich1984}. 
We denote by
\begin{equation}
  d(x,C) := \inf \{\|x-z\|\mid z\in C\}
\end{equation}
the distance of $x$ to $C$. In addition, for $\varepsilon>0$, we define
\begin{equation}
  C_\varepsilon := \{x\in\mathcal{H}\mid d(x,C)\leq \varepsilon\}.
\end{equation}
The following lemma provides a connection between the metric projection onto a nonempty, closed and convex set $C$ and the metric projection onto the enlarged set $C_\varepsilon$. The statement of this lemma can be found in \cite[Proposition 28.10]{BauschkeCombettes2011} without a proof. We provide a proof here for the convenience of the reader.

\begin{lemma}\label{lem:DistToSet}
  Let $C\subseteq \mathcal H$ be nonempty, closed and convex, $\varepsilon\geq 0$ and let $x\in \mathcal H\setminus C_\varepsilon$. Then
  \begin{equation}
  P_{C_\varepsilon}x = \left(1-\frac{\varepsilon}{\|x-P_{C}x\|}\right)P_{C}x+\frac{\varepsilon}{\|x-P_Cx\|}x.
  \end{equation}
  Consequently,
  \begin{equation}
  d(x,C) = d(x,C_\varepsilon) + \varepsilon.
  \end{equation}
\end{lemma}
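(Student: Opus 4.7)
The plan is to produce the candidate point $y$ given by the right-hand side, verify it lies in $C_\varepsilon$ and achieves the distance $\|x-P_Cx\|-\varepsilon$, and then match this against a matching lower bound on $d(x,C_\varepsilon)$ obtained from the triangle inequality. The two inequalities together will pin down $y=P_{C_\varepsilon}x$ by uniqueness of the metric projection and deliver the distance identity for free.

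First I would set up the basics: since $x\notin C_\varepsilon$, one has $\|x-P_Cx\|=d(x,C)>\varepsilon$, so the scalar $\tfrac{\varepsilon}{\|x-P_Cx\|}$ lies in $[0,1)$ and the expression defining $y$ is a genuine convex combination of $P_Cx$ and $x$. Moreover $C_\varepsilon=C+\overline{B}(0,\varepsilon)$, which is the Minkowski sum of a closed convex set and a closed convex ball, hence closed and convex; therefore $P_{C_\varepsilon}$ is well defined. A direct computation then gives
\begin{equation}
y-P_Cx=\frac{\varepsilon}{\|x-P_Cx\|}(x-P_Cx),\qquad x-y=\left(1-\frac{\varepsilon}{\|x-P_Cx\|}\right)(x-P_Cx),
\end{equation}
so $\|y-P_Cx\|=\varepsilon$, which shows $y\in C_\varepsilon$, and $\|x-y\|=\|x-P_Cx\|-\varepsilon=d(x,C)-\varepsilon$.

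Next I would establish the matching lower bound $d(x,C_\varepsilon)\geq d(x,C)-\varepsilon$. For any $z\in C_\varepsilon$, its projection $P_Cz\in C$ satisfies $\|z-P_Cz\|\leq\varepsilon$, so by the triangle inequality
\begin{equation}
\|x-z\|\geq\|x-P_Cz\|-\|P_Cz-z\|\geq d(x,C)-\varepsilon.
\end{equation}
Taking the infimum over $z\in C_\varepsilon$ gives the claimed lower bound.

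Combining the two steps, $y\in C_\varepsilon$ attains $\|x-y\|=d(x,C)-\varepsilon\leq d(x,C_\varepsilon)$, forcing $\|x-y\|=d(x,C_\varepsilon)$; by uniqueness of the metric projection onto the closed convex set $C_\varepsilon$ we conclude $y=P_{C_\varepsilon}x$, and the distance identity $d(x,C)=d(x,C_\varepsilon)+\varepsilon$ is read off immediately. I do not expect a serious obstacle here; the only point that needs mild care is making sure the hypothesis $x\notin C_\varepsilon$ is actually used (it guarantees $\|x-P_Cx\|>\varepsilon$, without which the convex combination would degenerate and the segment from $P_Cx$ towards $x$ would fail to reach the boundary of $C_\varepsilon$).
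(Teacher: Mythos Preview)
Your proof is correct and follows essentially the same approach as the paper: both construct the candidate point on the segment $[P_Cx,x]$ at distance $\varepsilon$ from $P_Cx$, verify it lies in $C_\varepsilon$ with $\|x-y\|=d(x,C)-\varepsilon$, and pair this with the triangle-inequality bound $d(x,C)\leq d(x,C_\varepsilon)+\varepsilon$. Your write-up is in fact slightly more complete, since you explicitly note that $C_\varepsilon$ is closed and convex and invoke uniqueness of the projection to conclude $y=P_{C_\varepsilon}x$, a step the paper leaves implicit.
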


\begin{proof}
  The inequality $d(x,C) \leq d(x,C_\varepsilon)+\varepsilon$ follows from the properties of the Hausdorff distance.

  For the convenience of the reader we present this argument. For all points $y\in C_\varepsilon$, we have
  \begin{align}\nonumber
    d(x,C) & = \inf\{\|x-z\|\mid z\in C\} \leq \inf\{\|x-y\| + \|y-z\|\mid z\in C\}\\
           & \leq \|x-y\| + d(y,C) \leq  \|x-y\| + \varepsilon
  \end{align}
  and hence
  \begin{equation}
  d(x,C) \leq \inf\{\|x-y\|\mid y\in C_\varepsilon\} + \varepsilon = d(x,C_\varepsilon)+\varepsilon.
  \end{equation}
  In order to show the reverse inequality, note that $\|x-y\|> \varepsilon$ for all $y\in C$. We set $z=P_Cx$, where $P_C$ is the nearest point projection onto $C$. This implies that $\|z-x\|=d(x,C)$ and $\frac{\varepsilon}{\|x-z\|}<1$. We define a new point $z_\varepsilon$ by
  \begin{equation}
  z_\varepsilon := \left(1-\frac{\varepsilon}{\|x-z\|}\right)z+\frac{\varepsilon}{\|x-z\|}x
  \end{equation}
  and get
  \begin{equation}
  \|z-z_\varepsilon\| = \frac{\varepsilon}{\|x-z\|} \|x-z\| = \varepsilon,
  \end{equation}
  that is, $z_\varepsilon\in C_\varepsilon$ because $z\in C$. On the other hand, we get
  \begin{equation}
  \|z_\varepsilon-x\| = \left(1-\frac{\varepsilon}{\|x-z\|}\right)\|z-x\| = \|z-x\| - \varepsilon
  = d(x,C)-\varepsilon
  \end{equation}
  and therefore $d(x,C_\varepsilon)\leq \|z_\varepsilon-x\| = d(x,C)-\varepsilon$, which finishes the proof.
\end{proof}

The point $y=P_Cx$ is characterized by: $y\in C$ and
\begin{equation}\label{eq:MetrProj}
  \langle x-y, z-y\rangle \leq 0
\end{equation}
for all $z\in C$; see, for example, \cite[Theorem 3.14]{BauschkeCombettes2011}, \cite[Theorem 1.2.4]{Ceg2012IterativeMethods} or \cite[Proposition 3.5]{GoebelReich1984}.
A direct computation shows that inequality~\eqref{eq:MetrProj} is equivalent to
\begin{equation}\label{eq:MetrProjas1SQNE}
  \|y-z\|^2 \leq \|x-z\|^2- \|x-y\|^2
\end{equation}
for all $z\in C$.
As a generalization of this property, for $\rho\geq 0$, an operator $T\colon \mathcal{H}\to\mathcal{H}$ is called \emph{$\rho$-strongly quasi-nonexpansive} if it satisfies
\begin{equation}
  \|Tx-z\|^2 \leq \|x-z\|^2 - \rho \|x-Tx\|^2
\end{equation}
for all $x\in\mathcal{H}$ and all $z\in \fix T$, where $\fix T$ denotes the set of all fixed points of $T$. The equivalence of \eqref{eq:MetrProj} and \eqref{eq:MetrProjas1SQNE} has been extended to $\rho$-strongly quasi-nonexpansive operators and $\alpha$-relaxed cutters; see, for example, \cite[Theorem 2.1.39]{Ceg2012IterativeMethods}. For the convenience of the reader, we now recall a few important properties of $\rho$-strongly quasi-nonexpansive operators.

\begin{theorem}\label{thm:SQNECompConvComb}
  Let $U_i\colon\mathcal{H}\to\mathcal{H}$ be $\rho_i$-strongly quasi-nonexpansive, $i=1,\ldots,m$, where $\min_{i} \rho_i>0$. Then the following statements hold true:
  \begin{enumerate}[(i)]
    \item The composition $U:=U_m\cdots U_1$ is $\rho$-strongly quasi-nonexpansive, where $\fix U = \bigcap_{i=1}^{m}\fix U_i$ and
        \begin{equation}
        \rho:=\left( \sum_{i=1}^m \frac 1 {\rho_i}\right)^{-1} \geq  \frac{\min_{i} \rho_i}{m}.
        \end{equation}
    \item Given $\omega_1,\ldots, \omega_m$, where $\omega_i>0$ for $i=1,\ldots,m$ and $\sum_{i=1}^{m}\omega_i=1$, the operator $U := \sum_{i=1}^{m} \omega_i U_i$ is $\rho$-strongly quasi-nonexpansive, where again $\fix U = \bigcap_{i=1}^{m} \fix U_i$ and
        \begin{equation}
        \rho:=\frac{\sum_{i=1}^m\frac{\omega_i\rho_i}{\rho_i+1}}
        {\sum_{i=1}^m\frac{\rho_i}{\rho_i+1}}\geq \min_{i} \rho_i.
        \end{equation}
  \end{enumerate}
\end{theorem}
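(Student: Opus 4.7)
The plan is to treat parts (i) and (ii) in parallel: in each case I first use a telescoped or convexity-based form of the SQNE inequality to characterize $\fix U$, and then extract the claimed value of $\rho$ by bounding $\|z-Uz\|^2$ in terms of the individual increments.

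For part (i), I set $a_0 := z$ and $a_i := U_i a_{i-1}$, so that $Uz = a_m$. Applying the $\rho_i$-SQNE inequality of $U_i$ to the pair $(a_{i-1}, p)$, where $p \in \bigcap_{i=1}^m \fix U_i$ is assumed to exist (this nonemptiness has to be imposed, as the example $U_1 x = x+1$, $U_2 x = x-1$ on $\mathbb{R}$ already shows), and summing telescopically yields
\[
\|Uz - p\|^2 \leq \|z - p\|^2 - \sum_{i=1}^m \rho_i \|a_{i-1} - a_i\|^2.
\]
The inclusion $\bigcap_i \fix U_i \subseteq \fix U$ is immediate; conversely, if $Uz = z$ then every summand above vanishes, so $a_0 = a_1 = \cdots = a_m$ and hence $z \in \fix U_i$ for each $i$. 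For the SQNE constant I write $z - Uz = \sum_{i=1}^m (a_{i-1} - a_i)$ and apply the weighted Cauchy--Schwarz inequality
\[
\|z - Uz\|^2 \leq \Bigl(\sum_{i=1}^m \tfrac{1}{\rho_i}\Bigr)\Bigl(\sum_{i=1}^m \rho_i \|a_{i-1} - a_i\|^2\Bigr),
\]
which, combined with the telescoped inequality, gives the constant $\rho = (\sum_i 1/\rho_i)^{-1}$; the bound $\rho \geq (\min_i \rho_i)/m$ then follows from the AM--HM inequality.

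For part (ii), $\bigcap_i \fix U_i \subseteq \fix U$ is again clear, and the reverse inclusion follows from the convexity of $\|\cdot - p\|^2$ combined with the individual SQNE inequalities: if $Uz = z$ and $p \in \bigcap_i \fix U_i$, then
\[
\|z - p\|^2 = \|Uz - p\|^2 \leq \sum_{i=1}^m \omega_i \|U_i z - p\|^2 \leq \|z - p\|^2 - \sum_{i=1}^m \omega_i \rho_i \|z - U_i z\|^2,
\]
forcing $U_i z = z$ for all $i$. The easy lower bound $\rho \geq \min_i \rho_i$ then follows from Jensen's inequality $\|z - Uz\|^2 \leq \sum_i \omega_i \|z - U_i z\|^2$ combined with the previous display.

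To reach the sharper explicit expression for $\rho$ claimed in (ii), I would pass through the cutter representation of SQNE operators: a map is $\rho_i$-SQNE precisely when it can be written as $U_i = \id + \tfrac{2}{1+\rho_i}(V_i - \id)$ for some cutter $V_i$. Rewriting $U = \id + \alpha(V - \id)$ with $\alpha := \sum_i \omega_i \tfrac{2}{1+\rho_i}$ and $V$ the induced convex combination of the cutters $V_i$, and using that a relaxation $\id + \alpha(V-\id)$ of a cutter $V$ is $(2-\alpha)/\alpha$-SQNE, should deliver the claimed algebraic form after simplification. This final algebraic identification is the main obstacle; the SQNE estimates themselves are routine, but matching the precise coefficients $\omega_i\rho_i/(\rho_i+1)$ in the numerator and $\rho_i/(\rho_i+1)$ in the denominator requires careful bookkeeping.
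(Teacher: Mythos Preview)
The paper does not prove this result; it merely cites Cegielski's monograph (Theorems 2.1.48 and 2.1.50) and Combettes--Yamada. Your self-contained argument follows exactly the route taken in those references. Part~(i) is correct as written: the telescoped SQNE inequality combined with the weighted Cauchy--Schwarz bound is precisely Cegielski's argument, and you are right that the hypothesis $\bigcap_i \fix U_i \neq \emptyset$ must be imposed. For part~(ii), the cutter-representation route you sketch is also the standard one and goes through; the step you flag as the ``main obstacle'' is in fact routine once you note that a convex combination of cutters with a common fixed point is again a cutter (immediate from Jensen's inequality applied to the $1$-SQNE inequality). Carrying out the algebra you describe gives
\[
\rho \;=\; \frac{\sum_{i=1}^m \omega_i \rho_i/(\rho_i+1)}{\sum_{i=1}^m \omega_i/(\rho_i+1)},
\]
which is the correct constant but \emph{differs} from the formula printed in the statement: the paper's denominator is missing the weights $\omega_i$. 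This is a typo in the paper---with all $\rho_i$ equal the printed formula collapses to $1/m$, contradicting the asserted bound $\rho\geq\min_i\rho_i$---so you should not attempt to reproduce it.
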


\begin{proof}
  See, for example,~\cite[Theorem 2.1.48]{Ceg2012IterativeMethods} and~\cite[Theorem 2.1.50]{Ceg2012IterativeMethods}. Statement (i) can also be deduced from \cite[Proposition 2.5]{CombettesYamada2015}.
\end{proof}

\begin{theorem}\label{thm:SQNE-Composition}
  Let $U_i\colon\mathcal{H}\to\mathcal{H}$ be $\rho_i$-strongly quasi-nonexpansive, $i=1,\ldots,m$, satifying $\bigcap_{i=1}^{m} \fix U_i \neq \emptyset$. Let $\rho:=\min_{1\leq i\leq m} \rho_i>0$ and $U:=U_mU_{m-1}\ldots U_1$. Let $x\in\mathcal{H}$ and $z\in\bigcap_{i=1}^{m} \fix U_i$ be arbitrary. Then,
  \begin{equation}
    \|Ux-z\|^2 \leq \|x-z\|^2 - \sum_{i=1}^{m} \rho_i \|Q_ix-Q_{i-1}x\|^2,
  \end{equation}
  where $Q_0:=\id$ and $Q_i := U_{i}U_{i-1}\ldots U_1$ for $i=1,\ldots, m$.
\end{theorem}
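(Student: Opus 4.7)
The plan is to do a straightforward telescoping argument using the definition of $\rho_i$-strongly quasi-nonexpansiveness applied successively along the chain $x, Q_1 x, Q_2 x, \ldots, Q_m x = Ux$. The key observation is that the point $z \in \bigcap_{i=1}^m \fix U_i$ is a fixed point of \emph{every} $U_i$, which is precisely what lets us apply the SQNE inequality at each intermediate iterate $Q_{i-1} x$ with respect to the \emph{same} anchor point $z$.

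Concretely, I would first note that $Q_i x = U_i Q_{i-1} x$ by definition. Since $z \in \fix U_i$, the $\rho_i$-SQNE property of $U_i$ applied at the point $Q_{i-1} x$ yields
\begin{equation}
\|Q_i x - z\|^2 = \|U_i Q_{i-1} x - z\|^2 \leq \|Q_{i-1} x - z\|^2 - \rho_i \|Q_{i-1} x - U_i Q_{i-1} x\|^2,
\end{equation}
and the last term equals $\rho_i \|Q_{i-1} x - Q_i x\|^2$. This gives one inequality per index $i \in \{1, \ldots, m\}$.

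Summing (or equivalently, iterating) these inequalities from $i=1$ to $i=m$ produces a telescoping cancellation of the norms $\|Q_i x - z\|^2$, leaving only the endpoints $\|Q_m x - z\|^2 = \|Ux - z\|^2$ on the left and $\|Q_0 x - z\|^2 = \|x - z\|^2$ on the right, together with the accumulated penalty terms $\sum_{i=1}^m \rho_i \|Q_i x - Q_{i-1} x\|^2$. This is exactly the claimed inequality; note that the bound $\rho := \min_i \rho_i$ mentioned in the statement is not actually needed for this refined form, though one could weaken each $\rho_i$ to $\rho$ if only a uniform constant is desired.

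There is essentially no obstacle here: the argument is a one-line induction once the identity $Q_i x = U_i Q_{i-1} x$ is recognized, and the only subtlety worth flagging is the requirement that a \emph{common} fixed point $z$ exist, which is guaranteed by the hypothesis $\bigcap_{i=1}^m \fix U_i \neq \emptyset$.
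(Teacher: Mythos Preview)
Your proposal is correct: the telescoping argument via successive application of the $\rho_i$-SQNE inequality at $Q_{i-1}x$ with the common fixed point $z$ is exactly the standard proof of this fact. The paper itself does not give a proof but simply cites \cite[Proposition~4.6]{CZ2014ApproximatelyShrinking}, whose argument is precisely the one you have outlined.
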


\begin{proof}
  See~\cite[Proposition~4.6]{CZ2014ApproximatelyShrinking}.
\end{proof}

Given a nonempty, closed and convex set $C\subseteq\mathcal{H}$, a sequence $\{x^k\}_{k=0}^\infty$ is called
\emph{Fej\'{e}r monotone  with respect to $C$} if the inequality
\begin{equation}
  \|x^{k+1}-z\| \leq \|x^k-z\|
\end{equation}
holds for all $z\in C$ and $k=0,1,2,\ldots$.

\begin{theorem}\label{thm:FejerMonotone}
  Let $C\subseteq\mathcal{H}$ be nonempty, closed and convex, and let $\{x_k\}_{k=0}^{\infty}$ be Fej\'{e}r monotone with respect to~$C$.
  \begin{enumerate}[(i)]
  \item \label{thm:FejerMonotone:Item1} If $\{x^{k}\}_{k=0}^\infty$ converges strongly to some point $x^\infty\in C$, then for every $k=0,1,2,\ldots$, we have $\|x^k-x^\infty\|\leq 2d(x^k,C)$.
  \item \label{thm:FejerMonotone:Item2} If there is a constant $q\in (0,1)$ such that $ d(x^{k+1}, C) \leq q d(x^k,C)$, then $\{x^k\}_{k=0}^{\infty}$ converges linearly to some point $x^\infty\in C$, that is, $\|x^k-x^\infty\| \leq 2d(x_0,C)q^{k}$ for all $k= 0,1,2,\ldots$.
  \item \label{thm:FejerMonotone:Item3} If a subsequence $\{x^{ks}\}_{k=0}^{\infty}$ of $\{x^{k}\}_{k=0}^{\infty}$ converges linearly to a point $x^\infty\in C$, that is, $\|x^{ks}-x^\infty\|\leq cq^k$, then the whole sequence $\{x^k\}_{k=0}^{\infty}$ converges linearly to $x^\infty$, that is,
    \begin{equation}
      \|x^k-x^\infty\|\leq \frac{c}{(\sqrt[\scriptstyle{s}]{q})^{s-1}} \left(\sqrt[\scriptstyle{s}]{q}\right)^k.
    \end{equation}
  \end{enumerate}
\end{theorem}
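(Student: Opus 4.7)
For part (i), I would exploit the iterated Fej\'er inequality $\|x^{k+j}-z\|\leq\|x^k-z\|$ valid for every $z\in C$ and $j\geq 0$. Fix $k$, take arbitrary $z\in C$, and pass to the limit $j\to\infty$ using strong convergence $x^{k+j}\to x^\infty$ and continuity of the norm; this yields $\|x^\infty-z\|\leq\|x^k-z\|$. The triangle inequality then gives $\|x^k-x^\infty\|\leq\|x^k-z\|+\|z-x^\infty\|\leq 2\|x^k-z\|$, and taking the infimum over $z\in C$ produces $\|x^k-x^\infty\|\leq 2d(x^k,C)$.

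For part (ii), iterating the hypothesis $d(x^{k+1},C)\leq q\,d(x^k,C)$ immediately yields $d(x^k,C)\leq q^k d(x^0,C)\to 0$. The crucial intermediate step is to promote this to norm convergence of $\{x^k\}$ itself. This is the classical fact that a Fej\'er monotone sequence with $d(x^k,C)\to 0$ converges strongly to a point of $C$: one checks that $\{P_C x^k\}$ is Cauchy using the projection characterization \eqref{eq:MetrProj} together with Fej\'er monotonicity, and then concludes by a triangle-inequality argument that $\{x^k\}$ shares the same limit $x^\infty\in C$. Once convergence is in hand, part (i) delivers $\|x^k-x^\infty\|\leq 2d(x^k,C)\leq 2d(x^0,C)q^k$.

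For part (iii), write $n=ks+r$ with $0\leq r<s$. Since $n\geq ks$ and $x^\infty\in C$, Fej\'er monotonicity of $\{x^k\}$ with respect to the singleton $\{x^\infty\}\subseteq C$ gives $\|x^n-x^\infty\|\leq\|x^{ks}-x^\infty\|\leq cq^k$. It remains to rewrite $q^k$ as a rate in $n$: since $ks=n-r$, we have $q^k=(\sqrt[s]{q})^{n-r}$, and because $\sqrt[s]{q}<1$ with $0\leq r\leq s-1$, we obtain $q^k\leq\frac{1}{(\sqrt[s]{q})^{s-1}}(\sqrt[s]{q})^n$, the claimed bound.

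The main substantive step is the Cauchy argument used to establish norm convergence in part (ii); parts (i) and (iii) reduce to short estimates once that convergence is available. A mild subtlety in (iii) is that because $q\in(0,1)$, passing from $k$ to $n$ increases the constant by a factor $(\sqrt[s]{q})^{-(s-1)}$, and one must track the direction of this inequality carefully when bounding $(\sqrt[s]{q})^{-r}$ uniformly in $r$.
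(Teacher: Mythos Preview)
Your proof is correct in all three parts. The paper itself does not give a proof of this theorem; it only cites \cite[Theorem~2.16]{BauschkeBorwein1996} and \cite[Proposition~1.6]{BauschkeBorwein1996}. Your argument is the standard direct one: in (i) you pass to the limit in the Fej\'er inequality and use the triangle inequality; in (ii) you show $\{P_C x^k\}$ is Cauchy (which indeed follows from Fej\'er monotonicity applied with $z=P_C x^n$, yielding $\|x^m-P_C x^n\|\leq d(x^n,C)$ for $m\geq n$) and then invoke (i); in (iii) you use Fej\'er monotonicity with respect to $x^\infty\in C$ to drop back to the nearest multiple of $s$ and rewrite the exponent. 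There is nothing to compare beyond noting that you have supplied what the paper outsources to the literature.
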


\begin{proof}
  See, for example,~\cite[Theorem~2.16]{BauschkeBorwein1996} and~\cite[Proposition~1.6]{BauschkeBorwein1996}.
\end{proof}

\begin{definition}\label{def:BR:set}
  Given a set $S\subseteq\mathcal{H}$, a family $\mathcal{C} = \{C_1,\ldots,C_M\}$ of convex and closed sets $C_i\subseteq\mathcal{H}$ with a nonempty intersection $C:=\bigcap_{i=1}^M C_i$ 
    is called
  \begin{enumerate}[(i)]
  \item \emph{regular over $S$} if for every $\{x^k\}\subseteq S$,
    \begin{equation}
      \max_{i=1,\ldots, M} d(x^k,C_i) \to_k 0\quad \Longrightarrow\quad d(x^k,C) \to_k 0;
    \end{equation}
  \item \emph{$\kappa_S$-linearly regular over $S$} if
    \begin{equation}
      d(x,C) \leq \kappa_S \max_{i=1,\ldots,M} d(x,C_i)
    \end{equation}
    holds for all $x\in S$ and some constant $\kappa_S>0$.
  \end{enumerate}
  We say that the family $\mathcal{C}$ is \emph{boundedly (linearly) regular} if it is ($\kappa_S$-linearly) regular over every bounded subset $S\subseteq\mathcal{H}$. We say that $\mathcal C$ is \emph{(linearly) regular} if it is ($\kappa_S$-linearly) regular over $S=\mathcal H$.
\end{definition}

Note that the constant $\kappa_S$ always satisfies $\kappa_S\geq 1$ since $C\subseteq C_i$ implies $d(x,C_i) \leq d(x,C)$ for $i=1,\ldots,M$. Below we recall some examples of regular families of sets \cite[Fact 5.8]{BauschkeNollPhan2015}.

\begin{example}\label{example:regularSets}
Let $C_i\subseteq\mathcal H$ and $\mathcal C$ be as in Definition \ref{def:BR:set}. 
Then the following statements hold:
\begin{enumerate}[(i)]
\item If $\dim\mathcal H<\infty$, then $\mathcal C$ is boundedly regular.
\item If $C_M\cap\interior\bigcap_{i=1}^{M-1}C_i\neq\emptyset$, then $\mathcal C$ is boundedly linearly regular.
\item If each $C_i$ is a half-space, then $\mathcal C$ is linearly regular.
\item If $\dim\mathcal H<\infty$, $C_i$ is a half-space for $i=1,\ldots,L$, and $\bigcap_{i=1}^LC_i \cap \bigcap_{i=L+1}^M\ri C_i\neq\emptyset$, then $\mathcal C$ is boundedly linearly regular.
\item If each $C_i$ is a closed subspace, then $\mathcal C$ is linearly regular if and only if $\sum_{i=1}^MC_i^\perp$ is closed.
\end{enumerate}
\end{example}
For a prototypical convergence result combining properties of Fej\'{e}r monotone sequences with regularity properties of sets, see, for example, \cite[Theorem~2.10]{Bau2001ProjectionAlgorithms}. For more information regarding regular families of sets see \cite{BauschkeBorwein1996, ReichZaslavski2014a, ReichZaslavski2014b}.

\section{String averaging projection methods}\label{sec:StringAveraging}

In this section we present sufficient conditions for weak, strong and linear convergence of dynamic string averaging projection methods depending on the regularity of the constraint sets.

\begin{lemma}\label{lem:StringAveraging}
  Let $C_i\subseteq\mathcal{H}$, $i\in I:=\{1,\ldots,M\}$, be closed and convex, and assume $C:=\bigcap_{i\in I} C_i \neq \emptyset$. Let $U$  be the string averaging projection operator defined by
  \begin{equation}\label{eq:SADef}
    U:=\sum_{n=1}^{N} \omega_n \prod_{j\in J_n} P_{C_j},
  \end{equation}
  where $J_n\subseteq I$ is a string (finite ordered subset) and $\omega_n\in(0,1)$, $\sum_{n=1}^N\omega_n=1$. Moreover, assume that $I=J_1\cup\ldots \cup J_N$. Then the following statements hold true:

  \begin{enumerate}[(i)]
  \item $U$ is nonexpansive.
  \item $U$ is $(1/m)$-strongly quasi-nonexpansive, where $m:=\max_{1\leq n\leq N} |J_n|$. Moreover, $\fix U=C$.
  \item For all $x\in \mathcal{H}$, $z\in C$ and $i\in I$, we have
    \begin{equation}\label{eq:SAPartialBound}
      d(x,C_i)^2 \leq \frac{2m}{\omega}\big( \|x-z\|^2-\|Ux-z\|^2\big) \leq \frac{2m}{\omega}\|Ux-x\| d(x,C),
    \end{equation}
    where $\omega:=\min_{1\leq n\leq N}\omega_n$.
  \item Consequently, if the family $\{C_i\mid i\in I\}$ is $\kappa$-linearly regular over $S\subseteq\mathcal{H}$, then, for all $x\in S$, we have
    \begin{equation}\label{eq:SAFullBound}
      \|Ux-x\| \geq \frac{\omega}{2m\kappa^2} \, d(x,C).
    \end{equation}
  \end{enumerate}
\end{lemma}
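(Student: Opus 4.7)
The plan is to dispatch the four items in order, with the real work concentrated in (iii). For (i) I note that each $P_{C_j}$ is nonexpansive, so each string composition $\prod_{j\in J_n} P_{C_j}$ is nonexpansive, and hence so is the convex combination $U$. For (ii) each $P_{C_j}$ is $1$-strongly quasi-nonexpansive with $\fix P_{C_j}=C_j$; applying Theorem \ref{thm:SQNECompConvComb}(i) to each string gives a $(1/|J_n|)$-SQNE composition $Q_n:=\prod_{j\in J_n}P_{C_j}$ with $\fix Q_n=\bigcap_{j\in J_n}C_j$, and then Theorem \ref{thm:SQNECompConvComb}(ii) applied to the convex combination yields a $\rho$-SQNE operator $U$ with $\rho\geq 1/m$ and $\fix U=\bigcap_n\bigcap_{j\in J_n}C_j=C$, where the last equality uses the covering hypothesis $I=J_1\cup\cdots\cup J_N$.

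For the central estimate (iii), I would start from convexity of $\|\cdot\|^2$, which gives $\|Ux-z\|^2\leq\sum_n\omega_n\|Q_nx-z\|^2$ for any $z\in C$. Applying Theorem \ref{thm:SQNE-Composition} to each $Q_n$ (with all $\rho_j=1$) and combining yields
\[
\|x-z\|^2-\|Ux-z\|^2 \;\geq\; \sum_{n=1}^N\omega_n\sum_{k=1}^{|J_n|}\|Q_n^{(k)}x-Q_n^{(k-1)}x\|^2 \;\geq\; \omega\sum_{n,k}\|Q_n^{(k)}x-Q_n^{(k-1)}x\|^2,
\]
where $Q_n^{(k)}$ denotes the partial composition of the first $k$ projections in the string $J_n$ and $Q_n^{(0)}:=\id$. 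Now, for a fixed $i\in I$, the covering hypothesis provides some $n$ and position $\ell$ with $i=j_\ell^n$; since $Q_n^{(\ell)}x\in C_i$, the triangle inequality together with Cauchy--Schwarz yields $d(x,C_i)^2\leq\|x-Q_n^{(\ell)}x\|^2\leq\ell\sum_{k=1}^{\ell}\|Q_n^{(k)}x-Q_n^{(k-1)}x\|^2\leq m\sum_{k=1}^{|J_n|}\|Q_n^{(k)}x-Q_n^{(k-1)}x\|^2$. Combined with the previous display this produces the first inequality of (iii) (with the slightly sharper constant $m/\omega$; the stated $2m/\omega$ is looser but convenient). For the second inequality I would factor $\|x-z\|^2-\|Ux-z\|^2=(\|x-z\|+\|Ux-z\|)(\|x-z\|-\|Ux-z\|)$, bound the first factor by $2\|x-z\|$ via quasi-nonexpansiveness of $U$ and the second by $\|Ux-x\|$ via the triangle inequality, and specialize $z=P_Cx$ so that $\|x-z\|=d(x,C)$.

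Statement (iv) then follows at once: $\kappa$-linear regularity over $S$ gives $d(x,C)^2\leq\kappa^2\max_i d(x,C_i)^2$, which combined with (iii) at $z=P_Cx$ and division by $d(x,C)>0$ (the case $x\in C$ being trivial) yields $\|Ux-x\|\geq(\omega/(2m\kappa^2))\,d(x,C)$. The main obstacle is the bookkeeping in (iii): selecting a string containing a prescribed index $i$, expressing $d(x,C_i)$ via the correct partial composition $Q_n^{(\ell)}x$, and aligning the Cauchy--Schwarz estimate with the square-summed differences produced by Theorem \ref{thm:SQNE-Composition}. Everything else reduces to the basic Fej\'er-type identities and the two structural results on SQNE operators already quoted.
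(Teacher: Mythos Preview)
Your proposal is correct and follows essentially the same approach as the paper's proof: both use convexity of $\|\cdot\|^2$ together with Theorem~\ref{thm:SQNE-Composition} to control the sum of squared increments along the strings, then telescope via the triangle/Cauchy--Schwarz inequality to bound $d(x,C_i)^2$, and finally pass to $\|Ux-x\|\,d(x,C)$ by bounding $\|x-z\|^2-\|Ux-z\|^2$ from above by $2\|Ux-x\|\,\|x-z\|$ and specializing $z=P_Cx$. Your difference-of-squares factorization for the last step is a cosmetic variant of the paper's polarization expansion, and your observation that the first inequality in (iii) already holds with the sharper constant $m/\omega$ is correct (the paper simply carries the factor $2$ through the whole chain for uniformity).
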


\begin{proof}
  Note that (i) follows from the fact that both compositions and convex combinations of nonexpansive operators
  are nonexpansive. Statement (ii) follows immediately from Theorem~\ref{thm:SQNECompConvComb}.

  In order to show (iii), let $x\in \mathcal{H}$ and $z\in C:=\bigcap_{i\in I} C_i$. Using the convexity of the function $x\mapsto \|x\|^2$ and Theorem~\ref{thm:SQNE-Composition}, we get
  \begin{align}\label{eq:SASQNE-Ineq} \nonumber
    \|Ux-z\|^2 & = \Big\| \sum_{n=1}^{N} \omega_n \prod_{j\in J_n} P_{C_j} x - z \Big\|^2
                 = \Big\| \sum_{n=1}^{N} \omega_n \Big(\prod_{j\in J_n} P_{C_j} x - z\Big) \Big\|^2 \\ \nonumber
               & \leq \sum_{n=1}^{N} \omega_n \Big\|\prod_{j\in J_n} P_{C_j}x -z\Big\|^2
                 \leq \sum_{n=1}^{N} \omega_n \Big(\|x-z\|^{2}-\sum_{l=1}^{m_n} \big\|Q_{l}^nx-Q_{l-1}^nx\big\|^2\Big)\\
               & = \|x-z\|^2 -\sum_{n=1}^{N}\sum_{l=1}^{m_n} \omega_n \big\|Q_l^nx-Q_{l-1}^nx\big\|^2,
  \end{align}
  where  $m_n:=|J_n|$, $J_n:=(i_{1},\ldots,i_{m_n})$, $Q_l^n:=\prod_{j=1}^{l} P_{C_{i_j}}$, and $Q_0^n:=\id$. Note that in the above estimates we also use the facts that $C\subseteq \bigcap_{j\in J_n}C_j$ and $\sum_{n=1}^N\omega_n=1$.
  From the above inequality, we deduce that
  \begin{equation}\label{eq:DSumBound}
    \omega\sum_{n=1}^{N} \sum_{l=1}^{m_n}\big\|Q_l^nx-Q_{l-1}^nx\big\|^2 \leq \|x-z\|^2-\|Ux-z\|^2 \leq 2\|Ux-x\|\,\|x-z\|,
  \end{equation}
  where $\omega = \min_{1\leq n \leq N} \omega_n$, because
  \begin{align}\nonumber
    \|Ux-z\|^2 & =\|Ux-x+x-z\|^2 \geq \|Ux-x\|^2 - 2\langle Ux-x,x-z\rangle + \|x-z\|^2 \\
               & \geq \|Ux-x\|^2 - 2\|Ux-x\|\|x-z\| + \|x-z\|^2 \geq - 2\|Ux-x\|\|x-z\| + \|x-z\|^2.
  \end{align}
  Setting $z=P_Cx$, we get
  \begin{equation}
  \omega \sum_{n=1}^{N} \sum_{l=1}^{m_n} \big\|Q_l^nx-Q_{l-1}^nx\big\|^2 \leq 2 \|Ux-x\|\,d(x,C).
  \end{equation}
  Given $x\in S$ and $i\in I$, choose $n\in\{1,\ldots,N\}$ and $1\leq p\leq m_n=|J_n|$, so that $i\in J_n$ and $Q_p^{n}=P_{C_i}Q_{p-1}^n$.
  Then we may conclude that
  \begin{align}\nonumber
    d(x,C_i)^2 & \leq \|x-P_{C_i}Q_{p-1}^nx\|^2 = \|x-Q_{p}^{n}x\|^2 \leq p \sum_{l=1}^{p} \|Q_l^nx-Q_{l-1}^nx\|^2\\
               & \leq m \sum_{n=1}^{N} \sum_{l=1}^{m_n} \|Q_l^nx-Q_{l-1}^nx\|^2 \label{eq:DistBound}
  \end{align}
  by using the triangle inequality, the convexity of the function $t\mapsto t^2$ and the fact that $m=\max_{1\leq n\leq N} m_n$. Combining~\eqref{eq:DistBound} with~\eqref{eq:DSumBound}, we obtain
  \begin{equation}\label{eq:FinalForStep2inLemma}
    d(x,C_i)^2 \leq \frac{2m}{\omega}(\|x-z\|^2-\|Ux-z\|^2) \leq \frac{2m}{\omega} \|Ux-x\| d(x,C),
  \end{equation}
  which finishes the proof of (ii).

  Finally, if the family $\{C_i\mid i\in I\}$ is $\kappa$-linearly regular over some $S\subseteq\mathcal{H}$, then~\eqref{eq:FinalForStep2inLemma} implies that
  \begin{equation}
  \frac{\omega}{2m\kappa^2} d(x,C)^2 \leq \frac{\omega}{2m} \max_{i\in I} d(x,C_i)^2
  \leq \|Ux-x\| d(x,C)
  \end{equation}
  holds for all $x\in S$, which proves the claim in (iv).
\end{proof}

\begin{theorem} \label{th:StringAveraging}
  Let $C_i\subseteq\mathcal{H}$, $i\in I:=\{1,\ldots,M\}$, be closed and convex, and assume $C:=\bigcap_{i\in I} C_i \neq \emptyset$. Let the sequence $\{x^k\}_{k=0}^\infty$ be defined by the dynamic string averaging projection method, that is, $x^0\in \mathcal H$ and for every
  $k=0,1,2,\ldots$, $x^{k+1}:=T_kx^k$, where
  \begin{equation}
  T_k:=\sum_{n=1}^{N_k} \omega^k_n \prod_{j\in J^k_n} P_{C_j},
  \end{equation}
  $J_n^k\subseteq I$ is a string and $\omega^k_n\in(0,1)$, $\sum_{n=1}^{N_k}\omega^k_n=1$. Moreover, let $I_k:=J^k_1\cup\ldots \cup J^k_{N_k}$.

  Assume that $\omega_n^k\geq \omega$ for some $\omega>0$ and that $m:=\sup_k\max_{1\leq n\leq N_k} |J_n^k|<\infty$. Moreover, assume that there is an integer $s\geq 1$ such that for each $k=0,1,2,\ldots$, we have $I=I_k\cup\ldots\cup I_{k+s-1}$.

  Then the following statements hold true:
  \begin{enumerate}[(i)]
    \item $\{x^k\}_{k=0}^\infty$ converges weakly to some point $x^\infty \in C\cap B(P_Cx^0,r)$, where $r=d(P_Cx^0,C)$.
    \item If the family $\{C_i\mid i\in I\}$ is regular over $B(P_C x^0,r)$, then the convergence is in norm.
    \item If the family $\{C_i\mid i\in I\}$ is $\kappa_r$-linearly regular over $B(P_Cx^0,r)$, then the convergence is linear, that is, $\|x^k-x^\infty\| \leq c_r q_r^k$ for each $k=0,1,2,\ldots,$ where
      \begin{equation}\label{eq:LinConvSA}
        c_r:=\frac{2d(x^0,C)}{q_r^{s-1}} \quad\text{and}\quad
        q_r := \sqrt[\leftroot{-1}\uproot{2}2s]{1-\frac{\omega}{2ms\kappa_r^2}}.
      \end{equation}
      Moreover, for each $k=0,1,2,\ldots,$ we have the following error bound:
      \begin{equation}\label{eq:ErrorSA}
        \frac{1}{2\kappa_r} \|x^k-x^\infty\| \leq \max_{i\in I} d(x^k,C_i) \leq c_r q_r^k.
      \end{equation}
    \end{enumerate}

\end{theorem}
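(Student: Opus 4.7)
The plan is to handle (i)--(iii) in sequence, building from Fej\'er monotonicity and asymptotic regularity to a block-wise contraction that yields the linear rate. By Lemma~\ref{lem:StringAveraging}(ii) every $T_k$ is $(1/m)$-strongly quasi-nonexpansive with $C\subseteq\fix T_k$, so $\{x^k\}$ is Fej\'er monotone with respect to $C$, remains in the closed ball $B(P_Cx^0,r)$ with $r=\|x^0-P_Cx^0\|=d(x^0,C)$, and a telescoping sum gives $\sum_k\|x^{k+1}-x^k\|^2<\infty$, hence $\|x^{k+1}-x^k\|\to 0$. Combined with Lemma~\ref{lem:StringAveraging}(iii), for $i\in I_k$ one has $d(x^k,C_i)^2\leq (2m/\omega)\|x^{k+1}-x^k\|\,d(x^k,C)\to 0$; the $s$-intermittent hypothesis together with $\|x^{k+s}-x^k\|\to 0$ then promotes this to $d(x^k,C_i)\to 0$ for every $i\in I$.

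Part (i) follows: weak lower semicontinuity of $d(\cdot,C_i)$ places every weak cluster point of $\{x^k\}$ in $C$, and the standard Fej\'er--Opial argument yields a unique weak limit $x^\infty\in C\cap B(P_Cx^0,r)$. For part (ii), bounded regularity of $\{C_i\}$ over $B(P_Cx^0,r)$ combined with $\max_i d(x^k,C_i)\to 0$ yields $d(x^k,C)\to 0$; Fej\'er monotonicity together with $d(x^k,C)\to 0$ is a well-known sufficient condition for norm convergence to an element of~$C$.

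The main work is (iii). The plan is to establish the block contraction $d(x^{(k+1)s},C)^2\leq\bigl(1-\tfrac{\omega}{2ms\kappa_r^2}\bigr)\,d(x^{ks},C)^2$ and then invoke Theorem~\ref{thm:FejerMonotone}(ii) on the subsequence $\{x^{ks}\}$, followed by Theorem~\ref{thm:FejerMonotone}(iii), to obtain the stated rate $q_r$ and constant $c_r$ for the full sequence. Fix $k$, put $z:=P_Cx^{ks}$, and set $D:=\|x^{ks}-z\|^2-\|x^{(k+1)s}-z\|^2\geq 0$. Theorem~\ref{thm:SQNE-Composition} applied across the block $T_{(k+1)s-1}\cdots T_{ks}$ gives $\sum_{j=ks}^{(k+1)s-1}\|x^{j+1}-x^j\|^2\leq mD$. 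For each $i\in I$ the $s$-intermittent hypothesis supplies $j(i)\in\{ks,\ldots,(k+1)s-1\}$ with $i\in I_{j(i)}$, and Lemma~\ref{lem:StringAveraging}(iii) applied at step $j(i)$ gives $d(x^{j(i)},C_i)^2\leq (2m/\omega)D$; Cauchy--Schwarz controls the shift $\|x^{ks}-x^{j(i)}\|^2\leq s\sum_j\|x^{j+1}-x^j\|^2\leq smD$. Combining via the triangle inequality bounds $d(x^{ks},C_i)^2$ by a constant multiple of $D$, and bounded linear regularity on $B(P_Cx^0,r)$ then converts $\max_i d(x^{ks},C_i)^2$ into a bound on $d(x^{ks},C)^2=\|x^{ks}-z\|^2$; rearranging and using $d(x^{(k+1)s},C)\leq\|x^{(k+1)s}-z\|$ yields the block contraction.

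The principal obstacle will be the careful bookkeeping required to match the exact constant $\omega/(2ms\kappa_r^2)$ in $q_r^{2s}$: a crude combination of the bounds above gives the correct order in $m,s,\omega,\kappa_r$ but a slightly suboptimal absolute factor, and recovering the advertised value requires summing the pointwise estimates of Lemma~\ref{lem:StringAveraging}(iii) sharply across the block rather than majorising each by the common quantity $D$ before invoking the triangle inequality. Finally, the error bound~\eqref{eq:ErrorSA} is immediate: Theorem~\ref{thm:FejerMonotone}(i) gives $\|x^k-x^\infty\|\leq 2d(x^k,C)\leq 2\kappa_r\max_i d(x^k,C_i)$, while $\max_i d(x^k,C_i)\leq d(x^k,C)\leq c_rq_r^k$ follows from the block contraction together with Fej\'er monotonicity.
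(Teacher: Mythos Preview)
Your proposal is correct and matches the paper's approach. For (i)--(ii) the paper simply cites \cite{ReichZalas2016}, while for (iii) it proves exactly your block contraction, achieving the sharp constant by the refinement you anticipate: each $\|x^{p}-x^{p-1}\|^2$ and the single distance term $d(x^{l_k},C_i)^2$ are bounded by \emph{consecutive} drops $\|x^{p-1}-z\|^2-\|x^{p}-z\|^2$ (via the $(1/m)$-SQNE property and Lemma~\ref{lem:StringAveraging}(iii) respectively), and these telescope after the Cauchy--Schwarz factor~$s$ to give $(2ms/\omega)D$ rather than a cruder multiple.
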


\begin{proof}
  It is not difficult to see that statements (i) and (ii) are special cases of \cite[Theorem~4.1]{ReichZalas2016}. Indeed, since $\{x^k\}\subseteq B(P_Cx^0,r)$, there is no need to assume bounded regularity of $\{C_i\mid i\in I\}$ in~\cite[Theorem~4.1]{ReichZalas2016}. Consequently, for strong convergence,  one can repeat exactly the same argument as in the proof of~\cite[Theorem~4.1]{ReichZalas2016} while assuming regularity only over $B(P_Cx^0,r)$.
  Therefore it suffices to show statement~(iii). We divide the proof into three steps.

  \textbf{Step 1.}
  We first show that the inequality
  \begin{equation}\label{eq:Step1}
    d(x^{ks}, C_\nu)^2 \leq \frac{2ms}{\omega} \left(\|x^{ks}-z\|^2-\|x^{(k+1)s}-z\|^2\right)
  \end{equation}
  holds for every $\nu\in I$, $k\in\mathbb{N}$ and $z\in C$.

  Fix $\nu\in I$ and $z\in C$. Given $k\in\mathbb{N}$, we choose $l_k\in\{ks,\ldots,(k+1)s\}$ to be the smallest index so that $\nu\in I_{l_k}=\bigcup_{n=1}^{N_{l_k}}J_n^{l_k}$ and $n\in\{1,\ldots,N_{l_k}\}$ to be the smallest index such that $\nu\in J_n^{l_k}$, and set
  \begin{equation}
    i_k := \argmax_{j\in J_n^{l_k}} d(x^{l_k},C_j).
  \end{equation}
  Since the mapping $d(\cdot, C_\nu)$ is nonexpansive, we have
  \begin{equation}
    d(x^{ks},C_\nu) - d(x^{l_k},C_\nu) \leq |d(x^{ks},C_\nu) - d(x^{l_k},C_\nu)|\leq \|x^{ks}-x^{l_k}\|
  \end{equation}
  and hence
  \begin{equation}
    d(x^{ks},C_\nu)\leq \|x^{ks}-x^{l_k}\| + d(x^{l_k},C_\nu) \leq \|x^{ks}-x^{l_k}\| +  d(x^{l_k}, C_{i_k}),
  \end{equation}
  where the second inequality follows from the choice of $i_k$.
  Squaring this inequality, we obtain
  \begin{align}\label{eq:distsquared} \nonumber
    d(x^{ks},C_\nu)^2 & \leq \Big(\sum_{p=ks+1}^{l_k} \|x^{p}-x^{p-1}\| + d(x^{l_k}, C_{i_k})\Big)^2 \\ \nonumber
                    & \leq (l_k-ks+1) \Big(\sum_{p=ks+1}^{l_k} \|x^{p}-x^{p-1}\|^2 + d(x^{l_k}, C_{i_k})^2\Big) \\
                    & \leq s\; \Big(\sum_{p=ks+1}^{l_k} \|x^{p}-x^{p-1}\|^2
                      + \frac{2m}{\omega}\big(\|x^{l_k}-z\|^2-\|x^{l_k+1}-z\|^2\big)\Big)
  \end{align}
  by the Cauchy-Schwarz inequality, \eqref{eq:SAPartialBound} and since $l_k-ks+1\leq (k+1)s-ks = s$.
  Note that Theorem~\ref{thm:SQNECompConvComb} implies that $T_{p-1}$ is $\frac{1}{m}$-strongly quasi-nonexpansive and hence
  \begin{equation}
    \|x^p-x^{p-1}\|^2 = \|T_{p-1}x^{p-1}-x^{p-1}\|^2 \leq m (\|x^{p-1}-z\|^2-\|x^p-z\|^2),
  \end{equation}
  which, when combined with~\eqref{eq:distsquared}, leads to
  \begin{align} \nonumber
    d(x^{ks},C_\nu)^2 & \leq s \Big(\sum_{p=ks+1}^{l_k} m (\|x^{p-1}-z\|^2-\|x^p-z\|^2)
                        + \frac{2m}{\omega}\big(\|x^{l_k}-z\|^2-\|x^{l_k+1}-z\|^2\big)\Big) \\
                    &\leq \frac{2ms}{\omega} (\|x^{ks}-z\|^2-\|x^{l_k+1}-z\|^2)
                      \leq \frac{2ms}{\omega} (\|x^{ks}-z\|^2-\|x^{(k+1)s}-z\|^2),
  \end{align}
  where the last inequality holds since the sequence $\{x^{k}\}_{k=0}^{\infty}$ is Fej\'{e}r monotone with respect to $C$.

  \textbf{Step 2.} Setting $z=P_{C}x^{ks}$ in~\eqref{eq:Step1} and using
  \begin{equation}
    \|x^{(k+1)s}-P_{C}x^{(k+1)s}\|\leq  \|x^{(k+1)s}-P_{C}x^{ks}\|,
  \end{equation}
  we may deduce that
  \begin{equation}
    d(x^{ks}, C_\nu)^2 \leq \frac{2ms}{\omega} \left(d(x^{ks},C)^2-d(x^{(k+1)s},C)^2\right).
  \end{equation}
  Since $\nu\in I$ is chosen arbitrarily, we may use the $\kappa_r$-linear regularity of $\{C_i\mid i\in I\}$ over $B(P_Cx^0,r)$ to obtain
  \begin{equation}
  d(x^{ks},C)^2 \leq \kappa_r^2 \max_{\nu\in I} d(x^{ks},C_\nu)^2 \leq \frac{2ms\kappa_r^2}{\omega} \left(d(x^{ks},C)^2-d(x^{(k+1)s},C)^2\right),
  \end{equation}
  which we may rearrange to arrive at
  \begin{equation}
    d(x^{(k+1)s},C)^2 \leq \left(1-\frac{\omega}{2ms\kappa_r^2}\right) d(x^{ks},C)^2.
  \end{equation}
  Applying Theorem~\ref{thm:FejerMonotone}(\ref{thm:FejerMonotone:Item2}), we see that the subsequence $\{x^{kp}\}_{k=0}^{\infty}$ converges linearly to a point $x^\infty\in C$. In other words, for the point $x^\infty\in C$,  we get
  \begin{equation}
    \|x^{kp}-x^\infty\| \leq 2d(x_0,C) \left(1-\frac{\omega}{2ms\kappa_r^2}\right)^{k/2}.
  \end{equation}
  Finally, Theorem~\ref{thm:FejerMonotone}(\ref{thm:FejerMonotone:Item3}) implies that the whole sequence $\{x^{k}\}_{k=0}^{\infty}$ converges linearly too. More precisely, it implies that the sequence converges linearly with the constants as in~\eqref{eq:LinConvSA}.

  \textbf{Step 3.} Since $x^k\to x^\infty$, we can use Theorem~\ref{thm:FejerMonotone}(\ref{thm:FejerMonotone:Item1}) and the fact that $\{C_i\mid i\in I\}$ is $\kappa_r$-linearly regular to obtain
  \begin{equation}
    \|x^k-x^\infty\| \leq 2 d(x^k,C) \leq 2\kappa_r \max_{i\in I} d(x^k, C_i).
  \end{equation}
  On the other hand, using~$C\subseteq C_i$, $x^\infty\in C$ and Step~2, we get
  \begin{equation}
    \max_{i\in I} d(x^k,C_i) \leq d(x^k, C) \leq \|x^k-x^\infty\| \leq c_r q_r^k,
  \end{equation}
  which finishes the proof of the error bound~\eqref{eq:ErrorSA}.
\end{proof}

\subsection{Linear convergence of the cyclic and simultaneous projection methods}
In this section we specialize Theorem \ref{th:StringAveraging} (iii) to cyclic and simultaneous projection methods applied to general closed and convex sets. Results related to closed linear subspaces are discussed in Section \ref{sec:subspaces}. To this end, assume that the family $\{C_i\mid i\in I\}$ is $\kappa_r$-linearly regular over $B(P_Cx^0,r)$.

Consider first the cyclic projection method
\begin{equation}\label{eq:cyclicPMa}
  x^{0}\in\mathcal{H}\qquad\text{and}\qquad x^{k+1} := P_{C_{[k]}}x^k,
\end{equation}
where $[k]:= (k \operatorname{mod}M)+1$. In this case there is no convex combination, hence $\omega=1$. The length of the longest string $m=1$ and the cyclicality parameter $s=M$. Consequently, by Theorem \ref{th:StringAveraging} (iii),
\begin{equation}
q_r=\sqrt[2M]{1-\frac 1 {2M\kappa_r^2}}.
\end{equation}
The cyclic projection method oftentimes is considered in an equivalent formulation involving composition of projections, that is,
\begin{equation}\label{eq:cyclicPMb}
  x^{0}\in\mathcal{H}\qquad\text{and}\qquad x^{k+1} := P_{C_M}\ldots P_{C_1}x^k;
\end{equation}
see, for example, \cite[Theorem 4.5]{DeutschHundal2008} and \cite[Corollary 6.2]{BauschkeNollPhan2015}. Note that, in general, one can obtain the latter variant of the method by repeating the former one $M$ times. Thus, the coefficient expressing the linear rate becomes $q_r'=q_r^M$. In our case, this leads to
\begin{equation}\label{eq:ex:qr}
q_r'=\sqrt{1-\frac 1 {2M\kappa_r^2}}.
\end{equation}
On the other hand, the latter variant of the cyclic projection method is a particular case of the string averaging projection method with one string of length $m=M$. Consequently, by using Theorem \ref{th:StringAveraging} (iii) with $s=1$ we recover formula \eqref{eq:ex:qr}. The first linear convergence result for general closed and convex sets and the remotest-set projection method can be found in \cite[Theorem 1]{GurinPoljakRaik1967}. It is not difficult to see that in the case of $M=2$, the remotest-set projection method coincides with the cyclic projection. In addition, it was also reported in \cite{GurinPoljakRaik1967}, although without a proof, that the cyclic projection method may converge linearly even for $M>2$.

A similar derivation of $q_r$ can also be made for the simultaneous projection method
\begin{equation}\label{eq:simultaneousPM}
  x^{0}\in\mathcal{H}\qquad\text{and}\qquad x^{k+1} := \frac 1 M \sum_{i=1}^M P_{C_i}x^k.
\end{equation}
In this case the lower bound for the convex combination coefficients $\omega=\frac 1 M$, $m=1$ and $s=1$. Consequently, we obtain the same formula as in \eqref{eq:ex:qr}. The first result on the linear convergence rate of iteration \eqref{eq:simultaneousPM} in the case of general closed and convex sets is due to Pierra \cite[Theorem 1.1]{Pierra1984}, who established it in the equivalent product space setting using \cite[Theorem 1]{GurinPoljakRaik1967}.

In Table \ref{table} we compare several examples of the convergence rates related to the cyclic and simultaneous projection methods, which can be found in the literature. It is worth mentioning that in \cite{BauschkeBorwein1996, BauschkeNollPhan2015, BorweinLiTam2017,DeutschHundal2008} the formula for $q_r$ was not given explicitly in the statement of the theorem and, for the purpose of this paper, it was derived from the proof by using Theorem~\ref{thm:FejerMonotone}. On the other hand, in \cite{BauschkeBorwein1996, BauschkeNollPhan2015, BorweinLiTam2017}, as well as in this manuscript, the authors deal with more general schemes than cyclic and simultaneous projection methods, which might result in weaker estimates. Due to the equivalence of methods \eqref{eq:cyclicPMa} and \eqref{eq:cyclicPMb}, we focus only on the latter case.

\begin{table}[h]

\centering
\settowidth\rotheadsize{\theadfont superiorization}
\resizebox{0.9\linewidth}{!}{
\begin{tabular}{|c | c| c| c|}
\hline
\thead{Authors} &
\thead{Result}&
\thead{Cyclic PM\\$x^{k+1}=P_{C_M}\ldots P_{C_1}x^k$} &
\thead{Simultaneous PM\\$x^{k+1}=\frac 1 M \sum_{i=1}^M P_{C_i}x^k$}
\\

\hline 
\makecell{Bauschke, Borwein, 1996} &
\cite[Theorem 5.7]{BauschkeBorwein1996} &
$q_r=\sqrt{1-\frac 1 {(2^M+2)\kappa_r^2}}$ &
$q_r=\sqrt{1-\frac 1 {(2+2M)\kappa_r^2}}$ \\

\hline 
\makecell{Beck, Teboulle, 2003} &
\cite[Theorem 2.2]{BeckTeboulle2003} &
x &
$q_r=\sqrt{1-\frac 1 {M\kappa_r^2}}$ \\

\hline 
\makecell{Deutsch, Hundal, 2008} &
\cite[Theorem 4.5]{DeutschHundal2008} &
$q_r=\sqrt{1-\frac 1 {4M\kappa_r^2}}$ &
x \\

\hline 
\makecell{Bauschke, Noll,\\Phan, 2015} &
\cite[Theorem 6.1]{BauschkeNollPhan2015} &
$q_r=\sqrt{1-\frac 1 {2M\kappa_r^2}}$ &
$q_r=\sqrt{1-\frac 1 {(M+1)\kappa_r^2}}$ \\

\hline 
\makecell{Borwein, Li,\\Tam, 2017} &
\cite[Theorem 3.1]{BorweinLiTam2017} &
$q_r=\sqrt{1-\frac 1 {2M\kappa_r^2}}$ &
$q_r=\sqrt{1-\frac 1 {(M+1)\kappa_r^2}}$ \\

\hline 
\makecell{Kolobov, Reich,\\Zalas, 2017} &
\cite[Corollary 23]{KolobovReichZalas2016} &
$q_r=\sqrt{1-\frac 1 {M\kappa_r^2}}$ &
$q_r=\sqrt{1-\frac 1 {M\kappa_r^2}}$ \\

\hline 
\makecell{Bargetz, Reich,\\Zalas} &
Theorem \ref{th:StringAveraging} &
$q_r=\sqrt{1-\frac 1 {2M\kappa_r^2}}$ &
$q_r=\sqrt{1-\frac 1 {2M\kappa_r^2}}$ \\

\hline
\end{tabular}
}
\caption{
    Examples of $q_r$ for cyclic and simultaneous projection methods (PM) for which one has a linear convergence rate of the form $\|x^k-x^\infty\|\leq c_r q_r^k$. The estimates attributed to \cite{BauschkeBorwein1996, BauschkeNollPhan2015, BorweinLiTam2017, DeutschHundal2008} were deduced from the proofs by using Theorem~\ref{thm:FejerMonotone}.}
\label{table}

\end{table}

\subsection{Closed linear subspaces}\label{sec:subspaces}
Assume that each $C_i \subseteq\mathcal H$, $i\in I$, is a closed linear subspace. In this case the limit point $x^\infty=P_Cx^0$; see, for example, \cite[Proposition 5.9]{BauschkeCombettes2011}. Note that without any loss of generality we can assume that the family $ \{C_i\mid i\in I\}$ is $\kappa$-linearly regular over $\mathcal H$ in Theorem \ref{th:StringAveraging}. Indeed, we have the following proposition.
\begin{proposition}
  The family $\{C_i\mid i\in I\}$ is $\kappa$-linearly regular over $B(P_Cx^0,r)$ if and only if it is $\kappa$-linearly regular over $\mathcal H$.
\end{proposition}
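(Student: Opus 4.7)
The plan is to exploit two basic properties of the distance function to a closed linear subspace $D \subseteq \mathcal H$: positive homogeneity, $d(\lambda x, D) = |\lambda|\, d(x, D)$ for all $\lambda \in \mathbb R$, and translation invariance under elements of $D$, $d(x + w, D) = d(x, D)$ for all $w \in D$. Both follow at once from the fact that $D$ is closed under scalar multiplication and under addition. Since every $C_i$ is a closed linear subspace, so is $C = \bigcap_{i \in I} C_i$, and therefore $P_C x^0 \in C \subseteq C_i$ for every $i \in I$. The forward implication in the proposition is immediate because $B(P_C x^0, r) \subseteq \mathcal H$, and the same constant $\kappa$ is trivially inherited.

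For the reverse implication, I would assume $\kappa$-linear regularity over $B(P_C x^0, r)$ and let $y \in \mathcal H$ be arbitrary. If $y = P_C x^0$, both sides of the target inequality vanish, so I restrict attention to $y \neq P_C x^0$, set $\lambda := \min\{1, r/\|y - P_C x^0\|\} \in (0, 1]$, and define $y_\lambda := P_C x^0 + \lambda(y - P_C x^0)$. Then $\|y_\lambda - P_C x^0\| \leq r$, so $y_\lambda \in B(P_C x^0, r)$. Applying translation invariance by $w = P_C x^0 \in C_i$ followed by positive homogeneity, and then translation invariance once more, yields
\[ d(y_\lambda, C_i) = d(y_\lambda - P_C x^0, C_i) = \lambda\, d(y - P_C x^0, C_i) = \lambda\, d(y, C_i) \]
for every $i \in I$, and the analogous identity holds with $C$ in place of $C_i$. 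Applying the hypothesis at $y_\lambda$ and dividing by $\lambda > 0$ then produces $d(y, C) \leq \kappa \max_{i \in I} d(y, C_i)$, which is $\kappa$-linear regularity over all of $\mathcal H$.

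I do not anticipate any substantial obstacle: the argument is a one-step homogenization, and the constant $\kappa$ transfers intact because the inequality being transported is positively homogeneous of degree one in its input. The only points requiring separate inspection are the degenerate case $r = 0$ (which forces $x^0 \in C$, making both sides of the regularity inequality identically zero on the trivial ball $\{P_C x^0\}$, so the equivalence holds vacuously) and the base case $y = P_C x^0$ in the reverse direction; both are disposed of directly, leaving the scaling construction above as the entire content of the proof.
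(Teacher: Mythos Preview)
Your proof is correct and follows essentially the same route as the paper's: both exploit translation invariance and positive homogeneity of the distance to a closed linear subspace to rescale an arbitrary point into the ball, apply the assumed regularity there, and scale back. Your version is in fact slightly more careful (you place the rescaled point explicitly at $P_Cx^0+\lambda(y-P_Cx^0)\in B(P_Cx^0,r)$ and treat the degenerate cases $y=P_Cx^0$ and $r=0$), though note that what you label the ``forward'' and ``reverse'' implications is opposite to the usual convention---a purely cosmetic issue.
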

\begin{proof}
  It sufficies to show that $\kappa$-linear regularity over the ball implies $\kappa$-linear regularity over $\mathcal H$. To this end, let $x\in\mathcal H$. Note that for any nonempty and closed linear subspace $M\subseteq\mathcal H$, $v\in M$, $\alpha\geq 0$ and $\beta\in \mathbb R$, we have $d(\alpha x+\beta v,M)=\alpha d(x,M)$. Thus,
  \begin{align}
    d(x,C) & = d(x-P_Cx^0, C)
    = \frac{\|x-P_Cx^0\|}{r}d\left(r \frac{x-P_Cx^0}{\|x-P_Cx^0\|},C\right)\\
    &\leq \frac{\|x-P_Cx^0\|}{r} \kappa \max_i d\left(r \frac{x-P_Cx^0}{\|x-P_Cx^0\|},C_i\right)= \kappa \max_i d(x,C_i),
  \end{align}
  where the inequality follows from the fact that $r(x-P_Cx^0)/\|x-P_Cx^0\|\in B(P_Cx^0,r)$.
  \end{proof}
  Consequently, the coefficients $\kappa_r$ and $q_r$ from Theorem \ref{th:StringAveraging} do not depend on $r$. Therefore it becomes reasonable to consider the smallest possible $\kappa$, which can be defined by
  \begin{equation}
  \kappa:=\supremum_{x\notin C}\frac{d(x,C)}{\max_{i\in I}d(x,C_i)}.
  \end{equation}
  The inverse of this constant $ \ell :=1/\kappa$, with the convention $[1/\infty=0]$, is called the \textit{inclination} of $\{C_i\mid i\in I\}$; see, for example, \cite{BadeaGrivauxMuller2011, PustylnikReichZaslavski2012, PustylnikReichZaslavski2013}.
  It turns out that the above $\kappa$ is strongly related to the extended cosine of the Friedrichs angle $\cos (\theta):=c(C_1,...,C_M)\in [0,1]$, where $\theta \in [0,\pi/2]$ and, following \cite[Definition 3.2]{BadeaGrivauxMuller2011},
  \begin{equation}\label{eq:def:Friedrich}
    c(C_1,...,C_M):=\sup\left\{\frac{1}{M-1}\frac{\sum_{i\neq j}\langle x_i,x_j\rangle }{\sum_{i=1}^M\|x_i\|^2}\mid x_i\in C_i\cap C^{\perp}\text{ and } \sum_{i=1}^M\|x_i\|^2\neq 0\right\}.
  \end{equation}
  As was shown in \cite[Lemma 3.1]{BadeaGrivauxMuller2011}, \eqref{eq:def:Friedrich} coincides with the classical definition of the cosine of the Friedrichs angle when $M=2$. Moreover, by \cite[Proposition 3.9]{BadeaGrivauxMuller2011}, we have
  \begin{equation}\label{eq:cosK}
    1-\frac{2M}{(M-1)\kappa}\leq \cos(\theta) \leq 1-\frac{1}{(N-1)\kappa^2}
  \end{equation}
  and, in particular, $\kappa <\infty$ if and only if $\cos(\theta)<1$. Thus the linear convergence rate of the dynamic string averaging projection method is related to the angle between the subspaces. Indeed, by using the first inequality, $1/\kappa^2\geq (\frac{1-\cos(\theta)}{4})^2$. Consequently, we can overestimate $q=q(\kappa,m,s,\omega)$ from \eqref{eq:LinConvSA} explicitly in terms of $\cos (\theta)$, that is,
  \begin{equation}\label{eq:QandCosSA}
    q(\kappa,m,s,\omega)=\sqrt[\leftroot{-1}\uproot{2}2s]{1-\frac{\omega}{2ms\kappa^2}}
    \leq \sqrt[\leftroot{-1}\uproot{2}2s]{1-\frac{\omega}{2ms}\left(\frac{1-\cos(\theta)}{4}\right)^2},
  \end{equation}
  which for the cyclic projection method $x^0\in\mathcal H;\ x^{k+1}:=P_{C_M}\ldots P_{C_1}x^k$, leads to
  \begin{equation}\label{eq:QandCyclic}
    q(\kappa,M,1,1)=\sqrt{1-\frac{1}{2M\kappa^2}}
    \leq \sqrt{1-\left(\frac{1-\cos(\theta)}{4M}\right)^2}=:r(\theta,M).
  \end{equation}
  This corresponds to \cite[Theorem 4.4]{BadeaGrivauxMuller2011}, where $\|(P_{C_M}\ldots P_{C_1})^k-P_C\|\leq r(\theta,M)^k$. As far as we know, the exact norm values, and thus the optimal convergence rates for the cyclic projection methods, are not known with the exception of $M=2$. Indeed, for the alternating projection method $x^0\in\mathcal H;\ x^{k+1}:=P_{C_2}P_{C_1}x^k$, the norm value due to Aronszajn \cite{Aronszajn1950} (inequality), and Kayalar and Weinert \cite{KayalarWeinert1988} (equality), we have
  $\|(P_{C_2}P_{C_1})^k-P_C\|=c(C_1,C_2)^{2k-1}.$ By using the second inequality from \eqref{eq:cosK}, we see that
  \begin{equation}
    c(C_1,C_2)^2 \leq q(\kappa,2,1,1)=\sqrt{1-\frac 1 {4\kappa^2}}.
  \end{equation}
  Note that the above inequality can also be derived directly by using the relation $1/\kappa = \sin (\theta/2)$ \cite[Theorem 3.7]{PustylnikReichZaslavski2012}. Indeed, we have $\cos^2 \theta\leq \cos^2(\theta/2)=1-\sin^2(\theta/2)\leq\sqrt{1-\frac 1 {4\kappa^2}}$.

\section{Perturbation resilience and superiorization}\label{sec:InexactOrbits}
In this section we consider the question of perturbation resilience of infinite products of nonexpansive operators. We show that the rate of convergence, both for weak as well as for strong convergence, is essentially preserved under summable perturbations. These results are applicable, in particular, to the string averaging projection methods discussed in Section \ref{sec:StringAveraging}. We present this connection in more detail in Section \ref{sec:StringAveraging2}.

\begin{theorem}[Perturbation resilience]\label{th:GSC} 
  For every $k=0,1,2,\ldots$, let $T_k\colon\mathcal H\rightarrow\mathcal H$ be nonexpansive and assume that $C\subseteq \bigcap_k \fix T_k$ is nonempty, closed and convex. Let $\{e_k\}_{k=0}^\infty\subseteq[0,\infty)$ be a given summable sequence of errors and let $\{x^k\}_{k=0}^\infty\subseteq\mathcal H$ be an inexact trajectory such that
  \begin{equation}\label{eq:th:GSC:ek}
    \|x^{k+1}-T_kx^k\|\leq e_k
  \end{equation}
  holds for every $k=0,1,2,\ldots$. Assume that for all $i=0,1,2,\ldots$, there is an $x_i^\infty\in C$ such that
  \begin{equation}\label{eq:WeakConvAss}
    T_k\cdots T_i x^i \rightharpoonup x_i^\infty.
  \end{equation}
  Then there is a point $x^\infty\in C$ such that $x_i^\infty\to x^\infty$ and $x^k\rightharpoonup x^\infty$. Moreover,
  \begin{equation}\label{eq:WeakRate}
    |\langle y, x^{k+1}-x^\infty\rangle| \leq |\langle y, T_k\cdots T_i x^i-x_i^\infty\rangle|+ 2 \|y\| \sum_{k=i}^{\infty} e_k
  \end{equation}
  holds for all $y\in\mathcal{H}$ and all $i=0,1,2,\ldots$. If, in addition, for all $i=0,1,2,\ldots$,
  \begin{equation}\label{eq:StrongConvAss}
    T_k\cdots T_ix^i \to x_i^\infty,
  \end{equation}
  then $x^k\to x^\infty$. Moreover,
  \begin{equation}\label{eq:StrongRate}
    \|x^{k+1}-x^\infty\| \leq \|T_k\ldots T_i x^i - x_i^\infty\| + 2 \sum_{k=i}^\infty e_k.
  \end{equation}
\end{theorem}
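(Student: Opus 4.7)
The plan is to compare the inexact trajectory $\{x^k\}$ with the exact trajectories $y_i^k := T_{k-1}\cdots T_i x^i$ (with the convention $y_i^i := x^i$), which by assumption converge weakly (resp.\ strongly) to $x_i^\infty \in C$. The starting observation is that nonexpansiveness of each $T_k$ combined with \eqref{eq:th:GSC:ek} gives
\begin{equation*}
  \|x^{k+1} - y_i^{k+1}\| \leq \|x^{k+1} - T_k x^k\| + \|T_k x^k - T_k y_i^k\| \leq e_k + \|x^k - y_i^k\|,
\end{equation*}
so by induction on $k \geq i$ we obtain $\|x^{k+1} - y_i^{k+1}\| \leq \sum_{j=i}^k e_j \leq \sum_{j=i}^\infty e_j$. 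A similar telescoping argument applied to the two exact trajectories starting from $x^i$ and $x^j$ (for $i \leq j$) shows, using nonexpansiveness of $T_k \circ \cdots \circ T_j$, that $\|y_i^k - y_j^k\| \leq \|y_i^j - x^j\| \leq \sum_{l=i}^{j-1} e_l$ for every $k \geq j$.

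Next I would produce the limit point $x^\infty$. Letting $k \to \infty$ in the previous inequality and using weak lower semicontinuity of the norm, one gets $\|x_i^\infty - x_j^\infty\| \leq \sum_{l=i}^{j-1} e_l$, so $\{x_i^\infty\}_{i=0}^\infty$ is a norm-Cauchy sequence in the closed set $C$ and therefore converges in norm to some $x^\infty \in C$; passing $j \to \infty$ in the same inequality gives the handy bound $\|x_i^\infty - x^\infty\| \leq \sum_{l=i}^\infty e_l$.

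To obtain \eqref{eq:WeakRate}, I would split via the triangle-type decomposition
\begin{equation*}
  \langle y, x^{k+1} - x^\infty\rangle = \langle y, x^{k+1} - y_i^{k+1}\rangle + \langle y, y_i^{k+1} - x_i^\infty\rangle + \langle y, x_i^\infty - x^\infty\rangle
\end{equation*}
and apply Cauchy--Schwarz to the first and third terms together with the two bounds derived above; this gives \eqref{eq:WeakRate} at once, and weak convergence $x^k \rightharpoonup x^\infty$ then follows by choosing $i$ large enough to make the tail $2\|y\|\sum_{k=i}^\infty e_k$ small and then letting $k \to \infty$ using \eqref{eq:WeakConvAss}. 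For the strong statement, exactly the same decomposition with norms in place of inner products, together with the hypothesis \eqref{eq:StrongConvAss}, yields \eqref{eq:StrongRate} and strong convergence.

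The main technical point, and essentially the only nontrivial step, is obtaining the Cauchy property of $\{x_i^\infty\}$ in the weak case: one cannot simply pass to limits inside the inequality, but weak lower semicontinuity of $\|\cdot\|$ is precisely what is needed. Everything else is bookkeeping with the triangle inequality, nonexpansiveness, and summability of $\{e_k\}$.
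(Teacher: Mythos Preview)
Your argument is correct, and the core estimates (the bound $\|x^{k+1}-y_i^{k+1}\|\le\sum_{j=i}^k e_j$ and the three-term splitting leading to \eqref{eq:WeakRate} and \eqref{eq:StrongRate}) coincide with the paper's. The genuine difference lies in how the limit $x^\infty$ is produced. The paper does not compare two exact trajectories; instead it first invokes Theorem~\ref{thm:WeakPertRes} as a black box to obtain $x^k\rightharpoonup x^\infty$, and only then, by passing to the limit $k\to\infty$ in $|\langle y,x^{k+1}-x_i^\infty\rangle|\le|\langle y,y_i^{k+1}-x_i^\infty\rangle|+\|y\|\sum_{j=i}^k e_j$, extracts $|\langle y,x^\infty-x_i^\infty\rangle|\le\|y\|\sum_{j=i}^\infty e_j$ and hence $x_i^\infty\to x^\infty$. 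You proceed in the opposite order: your extra comparison $\|y_i^k-y_j^k\|\le\sum_{l=i}^{j-1}e_l$ together with weak lower semicontinuity of the norm shows directly that $\{x_i^\infty\}$ is norm-Cauchy, which \emph{defines} $x^\infty$, and weak convergence of $\{x^k\}$ is then read off from the rate estimate itself. Your route is fully self-contained and avoids the dependence on Theorem~\ref{thm:WeakPertRes}; the paper's route is a little shorter but imports that earlier result. Both reach the same quantitative bound $\|x_i^\infty-x^\infty\|\le\sum_{j=i}^\infty e_j$, so the final inequalities agree.
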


\begin{proof}
  We claim that for every integer $k\geq i \geq 0$, we have
  \begin{equation}\label{eq:th:GSC:proof:step1}
    \|x^k-x^k_i\|\leq \sum_{j=i}^{k-1} e_j,
  \end{equation}
  where $x^{k+1}_i:=T_k\ldots T_ix^i$ and $x^i_i=x^i$. Note that by the definition of $x^k_i$, this inequality is true for every integer $k=i\geq 0$ with the right-hand side equal to zero. Now fix $i$ and assume that $k> i$. Thus, by the triangle inequality, (\ref{eq:th:GSC:ek}) and the nonexpansivity of $T_k$, we have
  \begin{align}
    \|x^{k+1}-x^{k+1}_i\| & \leq \|x^{k+1}+T_kx^k\| + \|T_kx^k-T_kx^k_i\| \nonumber\\
                          & \leq e_k+\|x^k-x^k_i\|,
  \end{align}
  which, by induction, yields (\ref{eq:th:GSC:proof:step1}).

  Now we can apply Theorem~\ref{thm:WeakPertRes} to get $x^k\rightharpoonup x^\infty$. Therefore, for any $y\in\mathcal H$, we may take the limit as $k\to\infty$ in
  \begin{equation}
    |\langle y, x^{k+1}-x_i^\infty\rangle| \leq |\langle y, x_i^{k+1}-x_i^\infty\rangle|+\|y\| \|x_i^{k+1}-x^{k+1}\|
    \leq  |\langle y, T_k\ldots T_ix^i-x_i^\infty\rangle|+\|y\| \sum_{j=i}^{k} e_j
  \end{equation}
  and using~\eqref{eq:WeakConvAss}, arrive at
  \begin{equation}\label{eq:ImprBound}
    |\langle y, x^{\infty}-x_i^\infty\rangle| \leq \|y\| \sum_{j=i}^{\infty} e_j.
  \end{equation}
  Consequently, $x_i^\infty\rightharpoonup x^\infty$. Note that~\eqref{eq:ImprBound} implies that $x_i^\infty\to x^\infty$ as well. Indeed, we have
  \begin{equation}
    \|x^\infty-x_i^\infty\| = \sup_{\|y\|\leq 1} |\langle y, x^\infty-x_i^\infty\rangle| \leq \sum_{j=i}^{\infty} e_j
  \end{equation}
  and by letting $i\to\infty$, we get strong convergence.

  Moreover, by the triangle inequality, \eqref{eq:th:GSC:proof:step1} and~\eqref{eq:ImprBound}, we have the following estimate:
  \begin{align}\label{eq:FinalWeak} \nonumber
    |\langle y, x^{k+1}-x^\infty\rangle| & \leq |\langle y, x^{k+1}-x_i^{k+1}\rangle| +
                                           |\langle y, x_i^{k+1}-x_i^\infty\rangle|+ |\langle y, x_i^\infty-x^\infty\rangle| \\ \nonumber
                                         & \leq \|y\| \|x^{k+1}-x_i^{k+1}\|+|\langle y, x_i^{k+1}-x_i^\infty\rangle|+\|y\|  \sum_{j=i}^{\infty} e_j\\
                                         & \leq 2\|y\| \sum_{j=i}^\infty e_j + |\langle y, x_i^{k+1}-x_i^\infty\rangle|,
  \end{align}
  which proves~\eqref{eq:WeakRate}.

  Now assume that $x_i^k\to x_i^\infty$. Since convergence in norm implies weak convergence, we may use~\eqref{eq:FinalWeak} to obtain
  \begin{equation}
      |\langle y, x^{k+1}-x^\infty\rangle| \leq 2 \sum_{j=i}^\infty e_j + \|x_i^{k+1}-x_i^\infty\|
  \end{equation}
  for all $y\in \mathcal{H}$ with $\|y\|\leq 1$ and hence
  \begin{equation}
    \|x^{k+1}-x^\infty\|=\sup_{\|y\|\leq 1} |\langle y, x^{k+1}-x^\infty\rangle| \leq 2 \sum_{j=i}^\infty e_j + \| x_i^{k+1}-x_i^\infty\|,
  \end{equation}
  which proves $x^k\to x^\infty$ and (\ref{eq:StrongRate}).
\end{proof}

\begin{remark}
  Note that the proof of Theorem~\ref{th:GSC} also works in the case of Banach spaces if we replace $y\in\mathcal{H}$ by a bounded linear functional. Hence all the results in this section are also true in the setting of Banach spaces. Since in this article we are only interested in operators on Hilbert spaces, we formulate the theorem only for this case.
\end{remark}

\begin{remark}\label{rem:WeakConvRate}
  Clearly the sequence $\{\|T_k\ldots T_ix^i-x_i^\infty\|\}_{k=i}^\infty$ measures the convergence rate of the unperturbed method, where the exact computations start from the iterate $x^i$. The same could apply to the sequence $\{|\langle y, T_k\cdots T_i x^i-x_i^\infty\rangle|\}_{k=i}^\infty$. Indeed, if we use the term ``weak convergence rate'' in the sense that for every $y\in\mathcal{H}$ there is a mapping $r_y\colon\mathbb{N}\to(0,\infty)$ such that
  \begin{equation}
    |\langle y, x^k-x^\infty\rangle |\leq r_y(k),
  \end{equation}
  then Theorem~\ref{th:GSC} can be interpreted as follows: both for weak and strong convergence, the convergence rate is essentially preserved in the presence of summable perturbations. Note that since $d(x^{k+1},C)\leq\|x^{k+1}-x^\infty\|$, by~\eqref{eq:StrongRate} and Lemma~\ref{lem:DistToSet}, we may also deduce estimates concerning the distance of the sequence to the set $C_\varepsilon$. Indeed, given $\varepsilon>0$, there is $i_\varepsilon \geq 0$ such that
  \begin{equation}
    d(x^{k+1}, C) \leq \|T_k\ldots T_ix^i-x^\infty_i\| + \varepsilon
  \end{equation}
  for $k\geq i \geq i_\varepsilon$. For these $k$, either $x^{k+1}\in C_\varepsilon$ and the inequality $d(x^{k+1},C_\varepsilon) \leq\|T_k\ldots T_i x^i - x_i^\infty\|$ trivially holds true, or if $x^{k+1}\not\in C_\varepsilon$, we may then obtain the inequality
  \begin{equation} \label{eq:cor:GSC:estim3}
    d(x^{k+1},C_\varepsilon)\leq \|x^{k+1}-x^\infty\|-\varepsilon\leq\|T_k\ldots T_i x^i - x_i^\infty\|
  \end{equation}
  by using Lemma~\ref{lem:DistToSet}.
\end{remark}

\begin{corollary}[Superiorization]\label{th:Super} 
  For every $k=0,1,2,\ldots$, let $T_k\colon\mathcal H\rightarrow\mathcal H$ be nonexpansive and assume that $C\subseteq \bigcap_k \fix T_k$ is nonempty, closed and convex. Given a summable sequence of positive numbers $\{\beta_k\}_{k=0}^\infty$ and a bounded sequence $\{v^k\}_{k=0}^\infty$ in $\mathcal{H}$, we define for $x^{0}\in\mathcal{H}$ the sequence $\{x^k\}_{k=0}^{\infty}$ by
  \begin{equation}\label{eq:Superiorization}
    x^{k+1} = T_{k}(x^k-\beta_k v^{k}).
  \end{equation}
  Assume that for every $i=0,1,2,\ldots$, there is $x_i^\infty \in C$ such that
  \begin{equation}\label{eq:th:Super:assumpt}
    T_k\cdots T_i x^i \rightharpoonup x_i^\infty.
  \end{equation}
  Then there is a point $x^\infty\in C$ such that $x_i^\infty\to x^\infty$ and $x^k\rightharpoonup x^\infty$. Moreover,
  \begin{equation}\label{eq:th:Super:WeakRate}
    |\langle y, x^{k+1}-x^\infty\rangle| \leq |\langle y, T_k\cdots T_i x^i-x_i^\infty\rangle|+ 2 \|y\| \sum_{k=i}^{\infty} \beta_k\|v^k\|
  \end{equation}
  holds for all $y\in\mathcal{H}$ and all $i=0,1,2,\ldots$. If, in addition, for all $i=0,1,2,\ldots$,
  \begin{equation}
    T_k\cdots T_ix^i \to x_i^\infty,
  \end{equation}
  then $x^k\to x^\infty$. Moreover,
  \begin{equation}\label{eq:th:Super:StrongRate}
    \|x^{k+1}-x^\infty\| \leq \|T_k\ldots T_i x^i - x_i^\infty\| + 2 \sum_{k=i}^\infty \beta_k\|v^k\|.
  \end{equation}
\end{corollary}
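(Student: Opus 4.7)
The plan is to recognize that the superiorization iteration \eqref{eq:Superiorization} fits directly into the perturbation framework of Theorem~\ref{th:GSC} with the explicit choice $e_k := \beta_k \|v^k\|$. Once we verify this, everything follows by a direct invocation, and the error bounds \eqref{eq:th:Super:WeakRate} and \eqref{eq:th:Super:StrongRate} arise because the sum $\sum e_k$ in \eqref{eq:WeakRate} and \eqref{eq:StrongRate} becomes precisely $\sum \beta_k\|v^k\|$.

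First, I would use the nonexpansivity of $T_k$ to check the perturbation inequality. Since $x^{k+1} = T_k(x^k - \beta_k v^k)$, we have
\begin{equation}
\|x^{k+1} - T_k x^k\| = \|T_k(x^k - \beta_k v^k) - T_k x^k\| \leq \|(x^k - \beta_k v^k) - x^k\| = \beta_k \|v^k\|,
\end{equation}
so condition \eqref{eq:th:GSC:ek} of Theorem~\ref{th:GSC} holds with $e_k = \beta_k \|v^k\|$.

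Next, I would confirm that $\{e_k\}_{k=0}^\infty$ is summable. Since $\{v^k\}_{k=0}^\infty$ is bounded, there exists $M > 0$ such that $\|v^k\| \leq M$ for every $k$. Combined with summability of $\{\beta_k\}_{k=0}^\infty$, this yields $\sum_{k=0}^\infty e_k \leq M \sum_{k=0}^\infty \beta_k < \infty$.

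Finally, under assumption \eqref{eq:th:Super:assumpt}, Theorem~\ref{th:GSC} applies and gives a point $x^\infty \in C$ such that $x_i^\infty \to x^\infty$ and $x^k \rightharpoonup x^\infty$, together with the bound
\begin{equation}
|\langle y, x^{k+1}-x^\infty\rangle| \leq |\langle y, T_k\cdots T_i x^i-x_i^\infty\rangle|+ 2 \|y\| \sum_{j=i}^{\infty} e_j,
\end{equation}
which, after substituting $e_j = \beta_j\|v^j\|$, yields \eqref{eq:th:Super:WeakRate}. The strong convergence statement and bound \eqref{eq:th:Super:StrongRate} follow analogously from the second part of Theorem~\ref{th:GSC}. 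There is no real obstacle here; the only point requiring attention is the elementary verification that the nonexpansivity of $T_k$ converts the shift $\beta_k v^k$ inside the operator into a controllable outer perturbation of the exact orbit, after which the corollary is a direct consequence of the main theorem.
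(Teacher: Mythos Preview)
Your proposal is correct and follows exactly the paper's approach: use nonexpansivity of $T_k$ to obtain $\|x^{k+1}-T_kx^k\|\leq \beta_k\|v^k\|$, set $e_k:=\beta_k\|v^k\|$, and invoke Theorem~\ref{th:GSC}. The paper's proof is simply a two-line version of what you wrote.
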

\begin{proof}
  By the nonexpansivity of $T_k$, we have $\|x^{k+1}-T_kx^k\|\leq \beta_k\|v_k\|$. Thus it suffices to substitute $e_k:=\beta_k \|v^k\|$ and apply Theorem \ref{th:GSC}.
\end{proof}

\begin{remark}
  As we have already mentioned in the Introduction, the steering sequence $\{v^k\}_{k=0}^\infty$ is related to a subgradient of some convex, continuous function $\phi\colon\mathcal H\rightarrow \mathbb R$. For example, one can use $v^k\in\partial \phi(x^k)$. 
  A variant of~\eqref{eq:Superiorization} can be considered, where $v^k$ is replaced by a linear combination, that is,
  \begin{equation}\label{eq:VarSuperorization}
    x^{k+1} = T_{k}\Big(x^k-\sum_{n=0}^{L_k} \beta_{k,n}v^{k,n}\Big),
  \end{equation}
  the sequence of all $v^{k,n}$ is bounded, $L_k\leq L$ for a fixed integer $L$  and the coefficients satisfy the summability condition
  \begin{equation}
    \sum_{k=0}^{\infty} \sum_{n=0}^{L_k} \beta_{k,n} < \infty.
  \end{equation}
  This can be found, for example, in \cite[Algorithm 4.1]{CZ2015Superiorization}. In this paper the authors choose a norm-one vector $v^{k,n}\in\partial \phi(x^k-\sum_{l=0}^{n-1}\beta_{k,l}v^{k,l})$ whenever $0\notin \partial \phi(x^k-\sum_{l=0}^{n-1}\beta_{k,l}v^{k,l})$ and set $v^{k,n}=0$ otherwise, where $n=0,\ldots,L_k$.
  For simplicity we restrict ourselves to the case where, as in~\eqref{eq:Superiorization}, at each step there is only a single $v^k$. Also note that by setting
  \begin{equation}
    \beta_k := \sum_{n=0}^{L_k} \beta_{k,n}\qquad\text{and}\qquad v^k := \sum_{n=0}^{L_k} \frac{\beta_{k,n}}{\beta_k} v^{k,n},
  \end{equation}
  we can rewrite~\eqref{eq:VarSuperorization} in the form of~\eqref{eq:Superiorization}, where $\|v^k\|\leq\max_{1\leq n\leq L_k} \|v^{k,n}\|$.
\end{remark}

\begin{example}[Preservation of the linear rate]\label{ex:LRPertSup} 
  Let us consider the basic method $x^{k+1}=T_kx^k$, where $T_k\colon\mathcal H\rightarrow\mathcal H$ are nonexpansive such that $\emptyset\neq C\subseteq \fix T_k$. Assume that this method converges linearly, that is, for given starting points $x^i\in \mathcal H$, $i=0,1,2,\ldots,$ there are $c_i\in(0,\infty)$, $q_i\in(0,1)$ and $x_i^\infty \in C$ such that for every $k\geq i$,
  \begin{equation}\label{eq:ex:LRPertSup:LR}
    \|T_k\ldots T_i x^i - x_i^\infty\| \leq c_i\cdot q_i^{k-i}.
  \end{equation}
  We show that this linear rate can be preserved first, by adding perturbations and second, by considering a superiorized version of the basic iterative method.
  \begin{enumerate}[(a)]
  \item (Perturbation resilience) To this end, let $\{x^k\}_{k=0}^\infty$ be a perturbed trajectory of the basic method with summable perturbations $\{e_k\}_{k=0}^\infty\subseteq [0,\infty)$, that is, $\|T_kx^k-x^{k+1}\|\leq e_k$ for every $k=0,1,2,\ldots$; compare with Theorem \ref{th:GSC}. Then we have the following estimate:
    \begin{equation}\label{eq:ex:LRPertSup:estim1}
      d(x^{k+1},C)\leq \|x^{k+1}-x^\infty\| \leq  c_i \cdot q_i^{k-i} + 2 \sum_{k=i}^\infty e_k.
    \end{equation}
    In particular, for every $\varepsilon>0$, there is an index $i_\varepsilon \geq 0$ such that the inequality
    \begin{equation}\label{eq:ex:LRPertSup:estim2}
      d(x^{k+1},C_\varepsilon)\leq  c_i \cdot q_i^{k-i}
    \end{equation}
    holds for every $k\geq i\geq i_\varepsilon$ similarly to the argument in Remark~\ref{rem:WeakConvRate}.

  \item (Superiorization) Now assume that $\{x^k\}_{k=0}^\infty$ is a superiorized version of the basic method, that is, $x^0\in\mathcal H$, $x^{k+1} = T_{k}(x^k-\beta_k v^{k})$, for every $k=0,1,2,\ldots$, where $\{\beta_k\}_{k=0}^\infty\subseteq [0,\infty)$ is summable and  $\{v^k\}_{k=0}^\infty \subseteq\mathcal{H}$ is bounded; compare with Corollary~\ref{th:Super}. Then, similarly to case (a), estimates~\eqref{eq:ex:LRPertSup:estim1} and~\eqref{eq:ex:LRPertSup:estim2} hold true with $e_k=\beta_k\|v^k\|$.

  \end{enumerate}
\end{example}

Note that in some cases we have more information on the subset $C$, for example, $C=\bigcap_{i=1}^M C_i$, where every $C_i$ is closed and convex; see Section~\ref{sec:StringAveraging}. In this case it would be of interest to control the quantity
\begin{equation}
  \max_{i=1,\ldots, M} d(x^k,C_i),
\end{equation}
which measures in some sense the distance of $x^k$ from $x^\infty$; see, for example, Theorem~\ref{th:StringAveraging} and estimate~\eqref{eq:ErrorSA}. Note that estimate~\eqref{eq:ErrorSA} is valid only if no perturbations are taken into account. The question which arises now is whether we can expect a similar estimate in the presence of perturbations. To answer this question we proceed with the following general theorem.

\begin{theorem}[Perturbation resilience continued]\label{th:GSC:Ci}
  Assume that all the assumptions from Theorem~\ref{th:GSC} are satisfied. Then there is $R>0$ such that for every $i=0,1,2,\ldots,$ we have $\{T_k\ldots T_i x^i\}_{k=i}^\infty\subseteq B(0,R)$. If, in addition, $C=\bigcap_{i\in I} C_i$, where $C_i\subseteq \mathcal H$ are closed and convex, $i\in I:=\{1,\ldots,M\}$, and the family $\{C_i\mid i\in I\}$ is $\kappa$-linearly regular over $B(0,R)$, then for every integer $k\geq i\geq 0$, we have
  \begin{equation}\label{eq:th:GSC:Ci:estimate}
    \frac{1}{2\kappa}\|x^{k+1}-x^\infty\| - 2\sum_{k=i}^\infty e_k\leq \max_{j\in I}d(x^{k+1},C_j)
    \leq\|T_k\ldots T_i x^i - x_i^\infty\| + 2 \sum_{k=i}^\infty e_k.
  \end{equation}
\end{theorem}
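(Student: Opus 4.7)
The plan is to prove the inequality in three steps: first produce a uniform radius $R$, then derive the upper estimate, then derive the lower estimate. For the uniform bound, I would fix a point $z_0 \in C$ and exploit $z_0 \in \fix T_k$. The perturbed iteration yields $\|x^{k+1}-z_0\| \leq \|T_k x^k-z_0\| + e_k \leq \|x^k-z_0\| + e_k$, so by induction $\|x^k-z_0\| \leq \|x^0-z_0\| + \sigma$ where $\sigma := \sum_{j=0}^\infty e_j < \infty$. For an unperturbed orbit starting at $x^i$, nonexpansiveness of each $T_k$ gives $\|T_k\cdots T_i x^i - z_0\| \leq \|x^i-z_0\| \leq \|x^0-z_0\| + \sigma$. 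Thus $R := \|z_0\| + \|x^0-z_0\| + \sigma$ works simultaneously for the perturbed trajectory $\{x^k\}$ and for every unperturbed orbit $\{T_k\cdots T_i x^i\}_{k \geq i}$, $i \geq 0$.

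For the upper estimate, the key ingredients are the telescoping bound $\|x^{k+1} - T_k\cdots T_i x^i\| \leq \sum_{j=i}^k e_j$ established as (\ref{eq:th:GSC:proof:step1}) in the proof of Theorem~\ref{th:GSC}, together with the norm estimate $\|x^\infty - x_i^\infty\| \leq \sum_{j=i}^\infty e_j$ obtained there by taking the supremum over $\|y\|\leq 1$ in (\ref{eq:ImprBound}). Since $x^\infty \in C \subseteq C_j$ for every $j \in I$, the triangle inequality
\begin{equation*}
d(x^{k+1},C_j) \leq \|x^{k+1}-x^\infty\| \leq \|x^{k+1} - T_k\cdots T_i x^i\| + \|T_k\cdots T_i x^i - x_i^\infty\| + \|x_i^\infty - x^\infty\|,
\end{equation*}
combined with the two preceding bounds, yields the right-hand inequality in (\ref{eq:th:GSC:Ci:estimate}).

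The main obstacle is the lower bound. Since $\{x^k\}$ fails to be strictly Fej\'er monotone with respect to $C$, and Theorem~\ref{th:GSC} only provides weak convergence under the given hypotheses, Theorem~\ref{thm:FejerMonotone}(\ref{thm:FejerMonotone:Item1}) is not directly applicable. My plan is to combine the approximate Fej\'er property with weak lower semicontinuity of the norm. For any $z \in C$ and $l \geq k+1$, the same inductive argument used for (\ref{eq:th:GSC:proof:step1}) yields $\|x^l - z\| \leq \|x^{k+1} - z\| + \sum_{j=k+1}^{l-1} e_j$. Specialising to $z := P_C x^{k+1}$ and passing to $\liminf$ as $l \to \infty$, while using $x^l \rightharpoonup x^\infty$ and weak lower semicontinuity of $\|\cdot - P_C x^{k+1}\|$, gives $\|x^\infty - P_C x^{k+1}\| \leq d(x^{k+1},C) + \sum_{j=k+1}^\infty e_j$. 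A further triangle inequality then produces $\|x^{k+1}-x^\infty\| \leq 2\, d(x^{k+1},C) + \sum_{j=k+1}^\infty e_j$. Since $x^{k+1} \in B(0,R)$ by Step~1, the $\kappa$-linear regularity over $B(0,R)$ provides $d(x^{k+1},C) \leq \kappa \max_{j \in I} d(x^{k+1},C_j)$. Rearranging and absorbing $\tfrac{1}{2\kappa}\sum_{j=k+1}^\infty e_j$ into $2\sum_{k=i}^\infty e_k$ (using $\kappa \geq 1$ and $k \geq i$) delivers the left-hand inequality.
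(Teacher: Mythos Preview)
Your argument is correct, and in fact takes a different and somewhat cleaner route than the paper for both inequalities. For the upper estimate, the paper does not use the simple observation $\max_{j}d(x^{k+1},C_j)\leq\|x^{k+1}-x^\infty\|$; instead it embeds the upper bound in a single chain that starts from $\|x^{k+1}-x^\infty\|\leq\|x^{k+1}_i-x^\infty_i\|+2\sum e_j$ (i.e.\ \eqref{eq:StrongRate}), passes through $2d(x^{k+1}_i,C)$ via Theorem~\ref{thm:FejerMonotone}(\ref{thm:FejerMonotone:Item1}) applied to the unperturbed orbit, then through $2\kappa\max_j d(x^{k+1}_i,C_j)$ via linear regularity, and finally replaces $x^{k+1}_i$ by $x^{k+1}$ using \eqref{eq:th:GSC:proof:step1}. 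Both the lower and the upper estimate are then read off from intermediate stages of that chain.

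The more substantive difference is in the lower bound. The paper applies Theorem~\ref{thm:FejerMonotone}(\ref{thm:FejerMonotone:Item1}) to the Fej\'er monotone sequence $\{x^k_i\}_{k\geq i}$ to get $\|x^{k+1}_i-x^\infty_i\|\leq 2d(x^{k+1}_i,C)$, which presupposes \emph{strong} convergence $x^k_i\to x^\infty_i$. Your alternative---quasi-Fej\'er monotonicity of the perturbed sequence combined with weak lower semicontinuity of $\|\cdot-P_Cx^{k+1}\|$---yields $\|x^{k+1}-x^\infty\|\leq 2d(x^{k+1},C)+\sum_{j\geq k+1}e_j$ directly, and hence works under the bare assumption \eqref{eq:WeakConvAss} of Theorem~\ref{th:GSC} without invoking \eqref{eq:StrongConvAss}. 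Since the statement of Theorem~\ref{th:GSC:Ci} only refers to ``all the assumptions from Theorem~\ref{th:GSC}'', your version covers the weak-convergence case as well, and even gives a marginally sharper constant before absorption into $2\sum_{j\geq i}e_j$. The price is that your argument does not produce both inequalities in a single chain; the paper's route is more economical in that sense but relies on the stronger hypothesis.
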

\begin{proof}
For every integer $k\geq i\geq 0$, let $x^{k+1}_i:=T_k\ldots T_ix^i$ and $x^i_i:=x^i$. By Theorem~\ref{th:GSC}, $\{x^k\}_{k=0}^\infty$ is convergent and thus bounded. Note that $\{x^k_i\}_{k=i}^\infty$ is Fej\'{e}r monotone with respect to $C$. Therefore, for every integer $k\geq i$ and $z\in C$, we have
\begin{equation}
  \|x^k_i-z\|\leq \|x^i_i-z\|=\|x^i-z\|
\end{equation}
which proves that $\{x^k_i\}_{k=i}^\infty\subseteq B(0,R)$ for every $i=0,1,2,\ldots,$ and some $R>0$.

Now we show that estimate (\ref{eq:th:GSC:Ci:estimate}) holds true. Since the sequence $\{x^k_i\}_{k=i}^\infty$ converges to some $x^\infty_i\in C$, by Theorem~\ref{thm:FejerMonotone}~(i), we have
\begin{equation}\label{eq:th:GSC:Ci:proof:1}
\|x^k_i-x^\infty_i\|\leq 2 d(x^k_i,C)
\end{equation}
for every integer $k\geq i \geq 0$. Moreover, by (\ref{eq:th:GSC:proof:step1}),
\begin{equation}\label{eq:th:GSC:Ci:proof:2}
  \|x^k-x^k_i\|\leq \sum_{k=i}^\infty e_k.
\end{equation}
Thus, by (\ref{eq:StrongRate}), (\ref{eq:th:GSC:Ci:proof:1}), linear regularity over $B(0,R)$ and the definition of the metric projection, we get
\begin{align}\label{eq:th:GSC:Ci:proof:3} \nonumber
  \|x^{k+1}-x^\infty\|&\leq \|x^{k+1}_i-x^\infty_i\|+2\sum_{k=i}^\infty e_k\\ \nonumber
                      &\leq 2d(x^{k+1}_i,C)+2\sum_{k=i}^\infty e_k\\ \nonumber
                      &\leq 2\kappa \max_{j\in I}d(x^{k+1}_i,C_j)+2\sum_{k=i}^\infty e_k\\
                      &\leq 2\kappa \max_{j\in I}\|x^{k+1}_i-P_{C_j}x^{k+1}\|+2\sum_{k=i}^\infty e_k,
\end{align}
which by the triangle inequality, (\ref{eq:th:GSC:Ci:proof:2}) and again by the definition of the metric projection is less than or equal to
\begin{align}\label{eq:th:GSC:Ci:proof:4} \nonumber
  & 2\kappa \max_{j\in I}d(x^{k+1},C_j)+2\kappa\|x^{k+1}_i-x^{k+1}\|+ 2\sum_{k=i}^\infty e_k\\ \nonumber
  &\leq 2\kappa \max_{j\in I}d(x^{k+1},C_j)+ 4\kappa\sum_{k=i}^\infty e_k\\ \nonumber
  &\leq 2\kappa d(x^{k+1},C)+ 4\kappa\sum_{k=i}^\infty e_k\\ \nonumber
  &\leq 2\kappa \|x^{k+1}-x^\infty_i\|+ 4\kappa\sum_{k=i}^\infty e_k\\ \nonumber
  &\leq 2\kappa\left(\|x^{k+1}_i-x_i^\infty\|+2\sum_{k=i}^\infty e_k\right)+4\kappa\sum_{k=i}^\infty e_k\\
  &\leq 2\kappa\|x^{k+1}_i-x_i^\infty\|+8\kappa\sum_{k=i}^\infty e_k.
\end{align}
Thus (\ref{eq:th:GSC:Ci:proof:3}) and (\ref{eq:th:GSC:Ci:proof:4}) complete the proof.
\end{proof}

It is easy to see that one can formulate a similar result in terms of superiorization. Indeed, we have the following corollary.

\begin{corollary}[Superiorization continued]\label{th:Super:Ci}
  Assume that all the assumptions from Corollary \ref{th:Super} are satisfied. Then there is $R>0$ such that for every $i=0,1,2,\ldots,$ we have $\{T_k\ldots T_i x^i\}_{k=i}^\infty\subseteq B(0,R)$. If, in addition, $C=\bigcap_{i\in I} C_i$, where $C_i\subseteq \mathcal H$ are closed and convex, $i\in I:=\{1,\ldots,M\}$, and the family $\{C_i\mid i\in I\}$ is $\kappa$-linearly regular over $B(0,R)$, then for every i=0,1,2,\ldots and $k\geq i$, we have
  \begin{equation}
    \frac{1}{2\kappa}\|x^{k+1}-x^\infty\| - 2\sum_{k=i}^\infty \beta_k\|v^k\|\leq \max_{j\in I}d(x^{k+1},C_j)
    \leq\|T_k\ldots T_i x^i - x_i^\infty\| + 2 \sum_{k=i}^\infty \beta_k\|v^k\|.
  \end{equation}
\end{corollary}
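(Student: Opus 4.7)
The plan is to deduce Corollary \ref{th:Super:Ci} directly from Theorem \ref{th:GSC:Ci} by the same reduction that was used to derive Corollary \ref{th:Super} from Theorem \ref{th:GSC}. The bridge between the two settings is simply nonexpansivity of $T_k$: for the superiorized iterate $x^{k+1}=T_k(x^k-\beta_k v^k)$ we have
\begin{equation}
  \|x^{k+1}-T_kx^k\| = \|T_k(x^k-\beta_kv^k)-T_kx^k\|\leq \beta_k\|v^k\|,
\end{equation}
so if we set $e_k:=\beta_k\|v^k\|$, then the inexact trajectory condition \eqref{eq:th:GSC:ek} is satisfied. Since $\{\beta_k\}_{k=0}^\infty$ is summable and $\{v^k\}_{k=0}^\infty$ is bounded, $\{e_k\}_{k=0}^\infty$ is summable as required.

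Next I would verify that all the remaining hypotheses of Theorem \ref{th:GSC:Ci} hold. The operators $T_k$ are nonexpansive and share the common fixed point set containing $C$ by assumption of Corollary \ref{th:Super}, and the weak convergence assumption \eqref{eq:th:GSC:assumpt}, i.e.\ $T_k\cdots T_ix^i\rightharpoonup x_i^\infty\in C$, is exactly the hypothesis \eqref{eq:th:Super:assumpt}. Thus Theorem \ref{th:GSC:Ci} applies and yields both the existence of $R>0$ such that the exact orbits $\{T_k\cdots T_ix^i\}_{k=i}^\infty$ lie in $B(0,R)$ and the two-sided estimate \eqref{eq:th:GSC:Ci:estimate}. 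Substituting $e_k=\beta_k\|v^k\|$ back into \eqref{eq:th:GSC:Ci:estimate} produces precisely the estimate claimed in Corollary \ref{th:Super:Ci}.

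Since the entire argument is a direct substitution, there is essentially no obstacle: the only point worth flagging is that the bound $R$ controlling the exact orbits depends only on the basic iteration $T_k\cdots T_ix^i$ and on $\{x^i\}$, not on the perturbation data, so it is legitimate to apply the $\kappa$-linear regularity over $B(0,R)$ just as in Theorem \ref{th:GSC:Ci}. The proof can therefore be written in essentially one line: apply Theorem \ref{th:GSC:Ci} with $e_k:=\beta_k\|v^k\|$.
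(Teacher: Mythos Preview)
Your reduction is exactly the one the paper intends: it states the corollary immediately after Theorem~\ref{th:GSC:Ci} with the remark that the superiorization variant is obtained analogously, and the implicit proof is precisely to set $e_k:=\beta_k\|v^k\|$ (as in the proof of Corollary~\ref{th:Super}) and invoke Theorem~\ref{th:GSC:Ci}. Nothing further is needed.
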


\section{String averaging projection methods revisited}\label{sec:StringAveraging2}

In this section we revisit string averaging projection methods and combine the results of the previous sections in order to obtain results regarding the convergence rate of dynamic string averaging projection methods in the presence of perturbations. As a corollary, we consider the convergence rate of superiorized dynamic string averaging methods.

\begin{theorem}[Perturbation resilience] \label{th:SA:Inexact}
  Let $C_i\subseteq\mathcal{H}$, $i\in I:=\{1,\ldots,M\}$, be closed and convex, and assume that $C:=\bigcap_{i\in I} C_i \neq \emptyset$. Let $\{T_k\}_{k=0}^\infty$ be a sequence of operators defined as in Theorem~\ref{th:StringAveraging}. Let the sequence $\{x^k\}_{k=0}^\infty$ be an inexact orbit of the dynamic string averaging projection method defined by $\{T_k\}_{k=0}^\infty$, that is, the inequality
  \begin{equation}
    \|x^{k+1}-T_kx^k\|\leq e_k
  \end{equation}
  holds for every $k=0,1,2,\ldots$, where the sequence $\{e_k\}_{k=0}^\infty\subseteq[0,\infty)$ is summable.

  Then the following statements hold true:
  \begin{enumerate}[(i)]
  \item For every $i=0,1,2,\ldots,$ there is $x_i^\infty \in C$ such that $T_k\cdots T_i x^i \rightharpoonup_k x_i^\infty \rightarrow_i x^\infty$ for some $x^\infty\in C$. Moreover, for all $y\in\mathcal{H},$ we have
    \begin{equation}
      |\langle y, x^{k+1}-x^\infty\rangle| \leq |\langle y, T_k\cdots T_i x^i-x_i^\infty\rangle|+ 2 \|y\| \sum_{k=i}^{\infty} e_k.
    \end{equation}

  \item If the family $\{C_i\mid i\in I\}$ is boundedly regular, then for every $i=0,1,2,\ldots,$ there is $x_i^\infty \in C$ such that $T_k\cdots T_i x^i \rightarrow_k x_i^\infty \rightarrow_i x^\infty$ for some point $x^\infty\in C$. Moreover, we have
    \begin{equation}
      \|x^{k+1}-x^\infty\| \leq \|T_k\ldots T_i x^i - x_i^\infty\| + 2 \sum_{k=i}^\infty e_k.
    \end{equation}
  \item If the family $\{C_i\mid i\in I\}$ is boundedly linearly regular, then there are $\kappa>0$, $c>0$ and $q>0$, and $x^\infty\in C$ such that
    \begin{equation}
      \frac{1}{2\kappa}\|x^{k+1}-x^\infty\| - 2\sum_{k=i}^\infty e_k\leq \max_{j\in I}d(x^{k+1},C_j)
      \leq c q^{k-i} + 2 \sum_{k=i}^\infty e_k.
    \end{equation}
  \end{enumerate}
\end{theorem}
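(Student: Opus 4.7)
The plan is to derive the three parts as applications of Theorem~\ref{th:StringAveraging} (convergence of the exact dynamic SA projection method from any starting point) combined with Theorems~\ref{th:GSC} and~\ref{th:GSC:Ci} (perturbation resilience). In each case, the idea is that the exact orbit started from the $i$-th iterate $x^i$ falls under the hypotheses of Theorem~\ref{th:StringAveraging}, which is precisely what is needed to invoke the perturbation results.

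For part (i), each $T_k$ is nonexpansive with $C\subseteq\fix T_k$, and Theorem~\ref{th:StringAveraging}(i) applied to the starting point $x^i$ gives $T_k\cdots T_i x^i \rightharpoonup x_i^\infty$ for every $i$. This is exactly assumption~\eqref{eq:WeakConvAss}, so Theorem~\ref{th:GSC} immediately yields the existence of $x^\infty\in C$ with $x^k\rightharpoonup x^\infty$, $x_i^\infty\to x^\infty$, and the claimed weak rate estimate.

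For part (ii), the exact orbit $\{T_k\cdots T_i x^i\}_{k\geq i}$ is Fej\'er monotone with respect to $C$, so it lies in the bounded ball $B(P_Cx^i, d(x^i,C))$. Bounded regularity of $\{C_j\mid j\in I\}$ then yields regularity over this ball, and Theorem~\ref{th:StringAveraging}(ii) upgrades weak to strong convergence $T_k\cdots T_i x^i\to x_i^\infty$. This is assumption~\eqref{eq:StrongConvAss}, so Theorem~\ref{th:GSC} delivers the strong-convergence rate~\eqref{eq:StrongRate}.

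For part (iii), the crux of the argument, one first notes that bounded linear regularity implies bounded regularity, so part (ii) applies and Theorem~\ref{th:GSC:Ci} supplies a uniform $R>0$ such that every exact orbit satisfies $\{T_k\cdots T_i x^i\}_{k\geq i}\subseteq B(0,R)$. Bounded linear regularity then provides a single constant $\kappa>0$ valid on $B(0,R)$, and applying Theorem~\ref{th:StringAveraging}(iii) to each exact orbit yields
\begin{equation}
  \|T_k\cdots T_i x^i - x_i^\infty\| \leq c_i\, q^{k-i},
\end{equation}
with $q:=\sqrt[2s]{1-\omega/(2ms\kappa^2)}$ independent of $i$ and $c_i=2d(x^i,C)/q^{s-1}$. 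Since the perturbed sequence $\{x^k\}$ converges, it is bounded, so $\sup_i d(x^i,C)<\infty$ and there is a uniform $c\geq c_i$. Inserting this bound into the inequality chain~\eqref{eq:th:GSC:Ci:estimate} furnished by Theorem~\ref{th:GSC:Ci} completes the proof. The main technical point will be the uniformity of the linear rate constants in $i$: one has to justify that a single ball, and hence a single linear regularity constant, suffices for every exact orbit starting from $x^i$. This is exactly what Theorem~\ref{th:GSC:Ci} was designed to provide.
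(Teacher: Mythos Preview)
Your proposal is correct and follows exactly the paper's approach, which is simply ``Combine Theorem~\ref{th:StringAveraging}, Theorem~\ref{th:GSC} and Theorem~\ref{th:GSC:Ci}.'' You have merely unpacked this one-line proof, correctly identifying in part~(iii) the one point that requires care: a single ball $B(0,R)$ from Theorem~\ref{th:GSC:Ci} and hence a single regularity constant $\kappa$ serve for every exact orbit, so the linear-rate constants $q$ and $c$ can be chosen uniformly in $i$.
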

\begin{proof}
  Combine Theorem \ref{th:StringAveraging}, Theorem \ref{th:GSC} and Theorem \ref{th:GSC:Ci}.
\end{proof}

\begin{corollary}[Superiorization] \label{th:SA:Super}
  Let $C_i\subseteq\mathcal{H}$, $i\in I:=\{1,\ldots,M\}$, be closed and convex, and assume that $C:=\bigcap_{i\in I} C_i \neq \emptyset$. Let $\{T_k\}_{k=0}^\infty$ be a sequence of operators defined as in Theorem~\ref{th:StringAveraging}. Let the sequence $\{x^k\}_{k=0}^\infty$ be a superiorized version of the dynamic string averaging projection method defined by $\{T_k\}_{k=0}^\infty$, that is,
  \begin{equation}
    x^0\in\mathcal H\qquad\text{and}\qquad x^{k+1} = T_{k}(x^k-\beta_k v^{k}),
  \end{equation}
  where $\{\beta_k\}_{k=0}^\infty\subseteq[0,\infty)$ is summable and $\{v^k\}_{k=0}^\infty\subseteq\mathcal H$ is bounded.

  Then the following statements hold true:
  \begin{enumerate}[(i)]
  \item for every $i=0,1,2,\ldots,$ there is $x_i^\infty \in C$ such that $T_k\cdots T_i x^i \rightharpoonup_k x_i^\infty \rightarrow_i x^\infty$
    for some $x^\infty\in C$. Moreover, for all $y\in\mathcal{H}$, we have
    \begin{equation}
      |\langle y, x^{k+1}-x^\infty\rangle| \leq |\langle y, T_k\cdots T_i x^i-x_i^\infty\rangle|+ 2 \|y\| \sum_{k=i}^{\infty} \beta_k\|v^k\|;
    \end{equation}

  \item If the family $\{C_i\mid i\in I\}$ is boundedly regular, then for every $i=0,1,2,\ldots,$ there is $x_i^\infty \in C$ such that $T_k\cdots T_i x^i \rightarrow_k x_i^\infty \rightarrow_i x^\infty$ for some point $x^\infty\in C$. Moreover, we have
    \begin{equation}
      \|x^{k+1}-x^\infty\| \leq \|T_k\ldots T_i x^i - x_i^\infty\| + 2 \sum_{k=i}^\infty \beta_k\|v^k\|.
    \end{equation}
  \item If the family $\{C_i\mid i\in I\}$ is boundedly linearly regular, then there are $\kappa>0$, $c>0$ and $q>0$, and $x^\infty\in C$ such that
    \begin{equation}
      \frac{1}{2\kappa}\|x^{k+1}-x^\infty\| - 2\sum_{k=i}^\infty \beta_k\|v^k\|\leq \max_{j\in I}d(x^{k+1},C_j)
      \leq c q^{k-i} + 2 \sum_{k=i}^\infty \beta_k\|v^k\|.
    \end{equation}
  \end{enumerate}
\end{corollary}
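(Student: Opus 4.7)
The plan is to reduce the superiorized iteration to an inexact orbit of the dynamic string averaging projection method and then invoke Theorem~\ref{th:SA:Inexact} directly. The key observation is that one superiorized step $x^{k+1} = T_k(x^k - \beta_k v^k)$ can be reinterpreted as a perturbation of the corresponding exact step $T_k x^k$, with perturbation norm controlled by $\beta_k \|v^k\|$.

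First, I would set $e_k := \beta_k \|v^k\|$ for each $k=0,1,2,\ldots$, and verify, using the nonexpansivity of $T_k$, that $\|x^{k+1} - T_k x^k\| = \|T_k(x^k - \beta_k v^k) - T_k x^k\| \leq \beta_k \|v^k\| = e_k$. This is precisely the requirement \eqref{eq:th:GSC:ek} needed in Theorem~\ref{th:SA:Inexact}, so $\{x^k\}_{k=0}^\infty$ qualifies as an inexact orbit of the dynamic SA method defined by $\{T_k\}_{k=0}^\infty$.

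Next, I would confirm that $\{e_k\}_{k=0}^\infty$ is summable. Since $\{v^k\}_{k=0}^\infty$ is bounded, setting $V := \sup_k \|v^k\| < \infty$ yields $\sum_{k=0}^\infty e_k \leq V \sum_{k=0}^\infty \beta_k < \infty$ by the summability of $\{\beta_k\}$. Hence the summability hypothesis of Theorem~\ref{th:SA:Inexact} is satisfied.

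Finally, the three conclusions (i), (ii) and (iii) follow immediately by applying the corresponding parts of Theorem~\ref{th:SA:Inexact} and substituting $\beta_k \|v^k\|$ for $e_k$ in each error bound. No serious obstacle arises here; once the reduction to an inexact orbit is in place, the argument is a direct appeal to the preceding theorem, paralleling the way Corollary~\ref{th:Super} is obtained from Theorem~\ref{th:GSC} and Corollary~\ref{th:Super:Ci} from Theorem~\ref{th:GSC:Ci}.
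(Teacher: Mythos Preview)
Your proposal is correct and essentially the same as the paper's: both reduce the superiorized iteration to an inexact orbit via $e_k := \beta_k\|v^k\|$ using the nonexpansivity of $T_k$, and then invoke the perturbation-resilience results already established. The only cosmetic difference is the order of reduction---the paper cites Theorem~\ref{th:StringAveraging} together with Corollaries~\ref{th:Super} and~\ref{th:Super:Ci}, whereas you go through Theorem~\ref{th:SA:Inexact}, but since Theorem~\ref{th:SA:Inexact} is itself proved by combining Theorem~\ref{th:StringAveraging} with Theorems~\ref{th:GSC} and~\ref{th:GSC:Ci}, the two routes are equivalent.
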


\begin{proof}
  Combine Theorem \ref{th:StringAveraging}, Corollary \ref{th:Super} and Corollary \ref{th:Super:Ci}.
\end{proof}

\vspace{3em}
\noindent\textbf{Acknowledgment.} The authors are grateful to the anonymous referees for all their comments and remarks which helped improve this paper.

\vspace{2em}
\noindent\textbf{Funding.} 
This research was supported in part by the Israel Science Foundation (Grant~389/12), the Fund for the Promotion of Research at the Technion and by the Technion General Research Fund.
Open access funding was provided by University of Innsbruck and Medical University of Innsbruck.

\small

\end{document}